\newcommand\blfootnote[1]{%
  \begingroup
  \renewcommand\thefootnote{}\footnote{#1}%
  \addtocounter{footnote}{-1}%
  \endgroup
}
\newtheorem{thm}{Theorem}[section]
\newtheorem{lem}{Lemma}[section]
\newtheorem{prop}{Proposition}[section]
\newtheorem{cor}{Corollary}[section]
\theoremstyle{definition}
\newtheorem{rem}{Remark}[section]
\def\R{\mathbb{R}}
\def\N{\mathbb{N}}
\def\Z{\mathbb{Z}}
\def\T{\mathbb{T}}
\def\C{\mathbb{C}}
\def\W8{W_{\infty}}
\def\d{\delta}
\def\h{h'}
\def\D{\Delta}
\def\d*{\delta_*}
\def\Em{\mathbb{E}_{\mu_\gamma}}
\def\En{\mathbb{E}_{\nu^r_\gamma}}
\def\w{\omega}
\def\div{{\rm div}}
\def\E{ E}
\begin{document}
\date{}

\title{Invariant measures for the two-dimensional averaged-Euler equations}
\author{Alexandra Symeonides}

\affil{GFMUL and Dep. de Matem\'atica da Faculdade de Ci\^encias, Univ. de Lisboa, \\
Campo Grande, 1749-016 Lisboa, Portugal.}

\maketitle

\begin{abstract}\noindent
We define a Gaussian invariant measure for the two-dimensional averaged-Euler equation and show the existence of its solution 
with initial conditions on the support of the measure. An invariant surface measure on the level sets of the energy is also constructed, as well
as the corresponding flow. Poincar\'{e} recurrence theorem is used to show that the flow returns infinitely many times in a neighborhood of the initial state.
\end{abstract}
\blfootnote{\noindent {\bf Mathematics Subject Classification (2010):} Primary 37L55; Secondary 37L40, 35Q86. \\
{\bf Keywords:} Invariant measures, averaged-Euler equation.}

\tableofcontents

\section{Introduction}
The purpose of this paper is to built invariant measures for the averaged-Euler equations. The averaged-Euler equations were introduced in 1998 by D. D. Holm, J. E. Marsden and T. S. Ratiu in \cite{DMR}. For an incompressible non-viscous fluid the equations are the following
\begin{equation*}
\frac{\partial Au}{\partial t} + (u \cdot \nabla)Au +(\nabla u)^T\cdot Au=-\nabla p, \qquad \nabla\cdot u=0,
\end{equation*}
where $A=(1-a^2\D)^s$ for $a$ a real parameter and $s$ a positive number. The mean velocity of the flow is denoted by $u:\R^2\to\R^2$ and the pressure by $p:\R^2\to\R$. The authors of \cite{DMR} consider a modification of the Euler equations such that non linear effects at small scales of the motion are negligible; therefore the dynamics remains turbulent, but non dissipative. 

Global existence and uniqueness for solutions of the two-dimensional averaged-Euler equation are known both in $\R^2$ and in a bounded domain for initial velocities in $H^3$, see respectively V. Busuioc \cite{Bus} and S. Shkoller \cite{Shk}. In the latter reference  the classical pde problem is transformed into a geometric one, considering geodesics in the (infinite-dimensional) group of volume-preserving diffeomorphisms. Averaged-Euler equations are indeed known to describe the velocities of geodesics on this group endowed with the $H^1$ metric. For further results concerning these equations we cite \cite{BT} and references therein.

For this system O. Bell, A. Chorin and W. Crutchfield pointed out in \cite{BCC} that invariant Gibbs measures can be considered, as the equations conserve the energy and the enstrophy. In their perspective the invariant measures are used to perform numerical predictions of the dynamics. Invariant measures are of significant interest when employed to improve existing deterministic results, particularly  when we deal with vector fields with low regularity, see for example \cite{C3}, \cite{AF}, \cite{FL}; and may be used, among others, to extend local to global existence results or to prove recurrence properties for a flow, see for example \cite{TzvVis}. Also quasiinvariant measures may serve to the same purposes and besides they are sometimes supported on more regular spaces, see for example \cite{Tzv}. Finally we mention several works related to invariant measures: \cite{AC}, \cite{AHKD}, \cite{C}, \cite{AFerr} about the two-dimensional Euler equations; and \cite{Bourg}, \cite{CSuzz}, \cite{ASSuzz} about other dispersive equations.

We consider an equivalent formulation, the vorticity formulation, of the averaged-Euler equations. On $\R^2$, for divergence free velocity fields, the ``stream function'' $\varphi: \R^2 \to \R$ is defined by $u=\nabla^\perp\varphi:=(-\partial_2 \varphi, \partial_1\varphi)$. The vorticity formulation in terms of stream function is the following
\begin{equation}\label{equation}
\frac{\partial A\D\varphi}{\partial t} + (\nabla^\perp\varphi\cdot \nabla)A\D\varphi=0.
\end{equation}
Below we consider \eqref{equation} on $\T \simeq [0,2\pi]\times[0,2\pi]$ and with periodic boundary conditions.

In this paper we rigorously define for this system an infinite dimensional Gibbs measure with respect to the enstrophy, formally
$$
``d\mu_\gamma\simeq \frac{1}{Z} e^{-\gamma \frac{Enstrophy}{2}}d\lambda",
$$
where $d\lambda$ denotes ``Lebesgue measure", $Z$ a normalizing constant and $\gamma$ is a positive parameter. We construct a flow for the averaged-Euler equations on the support of this measure.
Namely, as previously done by A. B. Cruzeiro and S. Albeverio in \cite{AC} for the analogous case of the Euler equations, applying a combination of Prohorov and Skorohod's  theorems to  finite dimensional flow approximations, we can construct continuous flows for the averaged-Euler vector field on some probability space $(\Omega, \mathcal{F}, P_\gamma)$ with values in $H^{1-\alpha,s}$ for some $s>0$ and $\alpha >2$, that is in a  Sobolev space of negative order. Therefore these pointwise continuous flows belong to a distribution space. In particular we will have
$$
U(t,\w)=U(0,\w)+\int_0^t B(U(s,\w)) ds, \qquad P_\gamma-a.e. ~\w \in \Omega, ~ \forall t\in\R,
$$
with $\mu_\gamma$ invariant under the flow. See Theorem \ref{exists} below.

We also consider, as previously done for the Euler equations by F. Cipriano in \cite{C}, the infinite-dimensional conditional measure defined on level sets of the energy. Comparing with the Euler case, there is no need here to define a renormalized energy, since in the averaged-Euler case the energy itself is square integrable with respect to $\mu_\gamma$. This surface measure  $\nu_\gamma^r$, is also invariant and therefore pointwise continuous flows can be constructed on a probability space with values on the level sets of the energy, say $V_r$, for some positive $r$. We have
$$
U'(t,\w)=U'(0,\w) + \int_0^t B^*(U'(s,\w))ds, \qquad P^r_\gamma-a.e.~\w, ~ \forall t\in\R,
$$
where $B^*$ is any redefinition of $B$. Moreover $\nu^r_\gamma$ is invariant under the flow. See Theorem \ref{flow_level_sets} below.

Finally since the Poincar\'{e} recurrence theorem holds, we have that the flow returns to a neighborhood of the initial state infinitely many times. The analogous for the Euler system was proved in \cite{CL} by A. Constantin and D. Levy.

In Section 2 we define the spaces of functions which are relevant in our work; we rewrite the vorticity formulation for the averaged-Euler equations as an infinite dimensional system of ordinary differential equations using the Fourier coefficients of the stream function. Here we also show that the energy and the enstrophy are conserved quantities. In Section 3 we rigorously define a Gibbs measure $\mu_\gamma$  and describe its support. Moreover we study the $L^p_{\mu_\gamma}$ regularity of the vector field and we show that it is divergence free with respect to $\mu_\gamma$. Finally we construct a flow on a suitable probability space. In Section 4 we define the infinite dimensional conditional measure $\nu_\gamma^r$ with support on the level sets of the energy $V_r$. In Section 5 we show that $\nu_\gamma^r$ is invariant and prove existence of a flow defined on some probability space. Finally we show that the solution returns to a vicinity of the initial state infinitely many times.

\section{The averaged-Euler equations}\label{The averaged-Euler equations}

Consider the operator $A=(1-a^2\D)^s$ for $a$ a real parameter and $s$ a positive number; if $s$ is not an integer, $A$ is a pseudo-differential operator. 
The averaged-Euler equations for an incompressible non-viscous fluid on $\R^2$ are (c.f. \cite{BCC}, \cite{DMR})
\begin{equation}\label{avEuler}
\frac{\partial Au}{\partial t} + (u \cdot \nabla)Au +(\nabla u)^T\cdot Au=-\nabla p, \qquad \nabla\cdot u=0
\end{equation}
where $u:\R^2\to\R^2$ is the velocity of the flow and $p:\R^2\to\R$ is the pressure. 
In what follows we denote $\nabla^\perp=(-\partial_2,\partial_1)$ where $\partial_1,\partial_2$ are the partial derivatives with respect to the first and second variable.

We have the following
\begin{thm}
A time dependent vector field $u$ is a smooth solution of \eqref{avEuler} if and only if there exists a smooth (real) function $\varphi$ such that $u=\nabla^\perp \varphi$ and $\varphi$ is a solution of the equation
\begin{equation}\label{vorAvEuler}
\frac{\partial A\D\varphi}{\partial t} + (\nabla^\perp\varphi\cdot \nabla)A\D\varphi=0.
\end{equation}
\end{thm}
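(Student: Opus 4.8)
The plan is to pass between the momentum form \eqref{avEuler} and the vorticity form \eqref{vorAvEuler} by applying to \eqref{avEuler} the two-dimensional scalar curl $\operatorname{curl}F:=\partial_1F_2-\partial_2F_1$, which annihilates gradients and hence erases the pressure term; the two implications of the stated equivalence then amount to performing this operation and, respectively, inverting it.

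For the ``only if'' part, I would start from a smooth solution $u$ of \eqref{avEuler} and use $\nabla\cdot u=0$: on the simply connected plane $\R^2$ the $1$-form $u_2\,dx^1-u_1\,dx^2$ is closed, hence exact, so there is a smooth (and, via the line-integral formula, smoothly time-dependent) function $\varphi$ with $u=\nabla^\perp\varphi$; then $\operatorname{curl}u=\D\varphi$. Since $A=(1-a^2\D)^s$ is a Fourier multiplier it commutes with every constant-coefficient differential operator, so $\operatorname{curl}(Au)=A\D\varphi$. The crux is the identity
\[
\operatorname{curl}\big[(u\cdot\nabla)v+(\nabla u)^T v\big]=(u\cdot\nabla)\operatorname{curl}v+(\nabla\cdot u)\operatorname{curl}v ,
\]
valid for all smooth $u,v$, which I would prove by expanding both sides with the Leibniz rule and observing that the second-order terms in $u$ and the remaining first-order ``cross'' terms cancel. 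Taking $\operatorname{curl}$ of \eqref{avEuler} with $v=Au$, using $\nabla\cdot u=0$ and this identity, collapses \eqref{avEuler} to $\partial_t(A\D\varphi)+(\nabla^\perp\varphi\cdot\nabla)(A\D\varphi)=0$, which is \eqref{vorAvEuler}.

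For the converse I would set $u:=\nabla^\perp\varphi$, so that $\nabla\cdot u=0$ automatically and again $\operatorname{curl}(Au)=A\D\varphi$, and consider the field $w:=\partial_t(Au)+(u\cdot\nabla)(Au)+(\nabla u)^T(Au)$. The same identity together with \eqref{vorAvEuler} gives $\operatorname{curl}w=0$; since on the simply connected domain $\R^2$ a smooth curl-free field is a gradient, $w=-\nabla p$ for some smooth $p$, and $\nabla\cdot u=0$ then makes $(u,p)$ a solution of \eqref{avEuler}.

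The only genuinely nontrivial step I anticipate is the verification of the curl identity above: it is elementary but must be carried out carefully, and it is exactly the place where incompressibility enters, since the extra summand $(\nabla\cdot u)\operatorname{curl}v$ must vanish for the reduction to close. Everything else — the passage from closed to exact and from curl-free to gradient, and the commutation of $A$ with derivatives — is routine on $\R^2$. (If one instead worked on $\T^2$, as in the rest of the paper, one would additionally have to handle the harmonic one-forms / nonzero means, but the present statement is on $\R^2$.)
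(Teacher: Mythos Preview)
Your proposal is correct and follows essentially the same route as the paper: take the scalar curl $\nabla^\perp\cdot$ of \eqref{avEuler} to kill the pressure and obtain the vorticity equation, invoke the stream function from $\nabla\cdot u=0$, and for the converse define the momentum residual and show it is curl-free hence a gradient. The paper simply asserts the outcome of the curl computation and defers the details to \cite{AHKD}, whereas you make the key identity $\operatorname{curl}\big[(u\cdot\nabla)v+(\nabla u)^T v\big]=(u\cdot\nabla)\operatorname{curl}v+(\nabla\cdot u)\operatorname{curl}v$ explicit; this is a welcome clarification but not a different argument.
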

\begin{proof} 
Taking the ``curl'' of \eqref{avEuler},
$$
\frac{\partial A \nabla^\perp \cdot u}{\partial t} + \nabla^\perp \cdot [(u \cdot \nabla)Au] + \nabla^\perp \cdot [(\nabla u)^T\cdot Au]=0,
$$
we get
$$
\frac{\partial A\nabla^\perp \cdot u}{\partial t} + (u\cdot \nabla) A\nabla^\perp\cdot u=0.
$$ 
From the condition $\nabla\cdot u=0$ we know that exists a real-valued function $\varphi$, called the stream function, such that $u=\nabla^\perp\varphi$; thus sufficiency is proved. To prove necessity let $f$ be defined by
$$
f=-\frac{\partial A \nabla^\perp \varphi}{\partial t} - (\nabla^\perp\varphi \cdot \nabla)A\nabla^\perp\varphi - (\nabla\nabla^\perp\varphi)^T\cdot A\nabla^\perp\varphi.
$$
Taking the ``curl'' we get
$$
-\frac{\partial A\D\varphi}{\partial t} - (\nabla^\perp\varphi\cdot \nabla)A\D\varphi=\nabla^\perp \cdot f,
$$
then $\nabla^\perp \cdot f=0$ by assumption and thus there exists a scalar function $p$ such that $f=\nabla p$.
The proof is performed in detail in \cite{AHKD} for the analogous case of the Euler system.
\end{proof}
Our general settings will be similar to the ones in \cite{C,AC,AHKD} where the case of Euler equation is studied.
We will consider our equations on the two-dimensional torus $\T \simeq [0,2\pi]\times[0,2\pi]$ and with periodic boundary conditions, that is 
$$
\varphi(0,y,t)=\varphi(2\pi,y,t) \mbox{ and } \varphi(x,0,t)=\varphi(x,2\pi,t), \qquad \forall (x,y)\in\T.
$$

\begin{rem}
From the expression of the vorticity equation we remark that if $s=0$ we are considering the Euler system.
\end{rem}

\subsection{Conserved quantities of the motion}

The averaged-Euler equation is conservative, meaning that the ``energy'' $(u,Au)$ is an invariant of the motion (the inner product is the one of $L^2(\T)$); also the ``enstrophy'' $(A\nabla^\perp \cdot u,A\nabla^\perp \cdot u)$ is a conserved quantity and we can write these quantities  in terms of the stream function $\varphi$ as
$$
E=-\frac{1}{2}\int_{\T} \varphi A\D \varphi dx
$$
and
$$
S=\frac{1}{2}\int_{\T} (A\D\varphi)^2 dx.
$$
We have in fact that
\begin{align*}
\frac{d E}{dt}&
=-\left( \frac{\partial }{\partial t} A\D \varphi , \varphi \right)=\left((\nabla^\perp\varphi\cdot \nabla)A\D\varphi, \varphi\right)\\
&=\left(\nabla^\perp\varphi\cdot \nabla \varphi,A\D\varphi\right)=0
\end{align*}
and 
\begin{align*}
\frac{d }{dt}S&=\left( \frac{\partial A\D \varphi}{\partial t},A\D \varphi\right)=-\left((\nabla^\perp\varphi\cdot \nabla)A\D\varphi, A\D \varphi\right)\\
&=-\left(\nabla^\perp\varphi\cdot \nabla \varphi, A\D A\D \varphi\right)=0
\end{align*}
since $\nabla^\perp\varphi\cdot \nabla \varphi=0$. 

\subsection{Fourier expansion of the system}
 We want to write the averaged-Euler partial differential equation as an infinite dimensional ordinary differential equation by means of Fourier expansion series
 (c.f. \cite{AC,AHKD} for an analogous formulation of the Euler equation). We consider an orthonormal basis of $L^2(\T)$, $\{e_k(x)\}_{k\in \Z^2}$, defined by
$e_k(x)=\frac{1}{2\pi}e^{ik\cdot x}$. These functions are eigenfunctions of the Laplace operator:
$$
\D e_k(x)=-k^2 e_k(x), \qquad \forall k\in \Z^2.
$$
Here $k\cdot x=k_1x_1+ k_2x_2$ for $k=(k_1,k_2)\in\Z^2$ and $x=(x_1,x_2)\in \T$ and $k^2=k\cdot k$. We say that $k\in\Z^2$ is positive if $k_1>0$ or $k_1=0$ and $k_2>0$. The Sobolev spaces 
$$\mathcal{H}^{2s+2}(\T)=\left\{ v: \T\to \R : \int \sum_{|\alpha|\leq 2s+2} |D^\alpha v(x)|^2 dx<+\infty \right\}$$ can be identified with the complex Hilbert spaces 
$$
H^{2,s}=\left\{v=\sum_{k\in\Z^2}v_ke_k : \sum_{k>0} k^4(1+a^2k^2)^{2s}|v_k|^2<+\infty \right\}
$$
with inner product $<u,v>_{2,s}=\sum_{k>0}  k^4(1+a^2k^2)^{2s}u_k\bar v_k$. For general $p\in\R$ we define
$$
H^{p,s}=\left\{v=\sum_{k\in\Z^2}v_ke_k : \sum_{k>0} k^{2p}(1+a^2k^2)^{ps}|v_k|^2<+\infty \right\}
$$
with inner product $<u,v>_{p,s}=\sum_{k>0}  k^{2p}(1+a^2k^2)^{ps}u_k\bar v_k$. 

Henceforth we write $\varphi(x,t)=\sum_{h>0}\omega_h(t)e_h(x)$ and we write the energy and the enstrophy as
$$
E=\frac{1}{2}\|\varphi\|_{1,s}^2
$$
and
$$
S=\frac{1}{2}\|\varphi\|_{2,s}^2.
$$
Set $\h^\perp=(-\h_2, \h_1)$ for $\h\in\Z^2$; we have
\begin{align*}
\nabla^\perp\varphi\cdot \nabla A\D\varphi&=-\frac{1}{2\pi}\sum_{\substack{h>0, \\ \h>0, \\ \h\neq h}} \w_h \w_{\h}
(h\cdot \h^\perp) \h^2 (1+a^2\h^2)^s e_{h+\h}(x)\\
&=-\frac{1}{2\pi}\sum_{\substack{h>0, \\ k>0, \\ \h+h=k}} \w_h \w_{\h}
(h\cdot \h^\perp) \h^2 (1+a^2\h^2)^s e_k(x).
\end{align*}
Hence equation  \eqref{vorAvEuler} holds if and only if 
$$
-\sum_{k>0}\left[ k^2(1+a^2k^2)^s \frac{d \w_k}{dt} + \frac{1}{2\pi}\sum_{\substack{h+\h=k, \\ h>0}}  \w_h \w_{\h}
(h\cdot \h^\perp) \h^2 (1+a^2\h^2)^s  \right]e_k(x)=0,
$$
meaning that \eqref{vorAvEuler} can be written as an infinite dimensional ODE as follows:
\begin{align}\label{vorAvEulerFourier}
2\pi k^2(1+a^2k^2)^s \frac{d \w_k}{dt}&= -\sum_{\substack{h+\h=k, \\ h>0}}  (h\cdot \h^\perp) \h^2 (1+a^2\h^2)^s \w_h \w_{\h} \nonumber \\
&=\frac{1}{2}\sum_{\substack{h+\h=k, \\ h>0}} (h\cdot \h^\perp) [h^2 - \h^2] (1+a^2\h^2)^s \w_h \w_{\h},\quad \forall k>0.
\end{align}
From $\h^\perp\cdot h=-\h\cdot h^\perp$ we obtain the following form of the averaged-Euler equations:
$$
\frac{d \w_k}{dt}=B_k(\varphi), \qquad \forall k>0,
$$
where the vector field $B$ is defined by
\begin{equation}\label{vector_fieldB}
B(\varphi)=\sum_{k} B_k(\varphi)e_k,
\end{equation}
with
\begin{equation}\label{vector_field}
B_k(\varphi)=\frac{1}{2\pi }\sum_{h>0} \left[\frac{1}{k^2 }(h^\perp \cdot k)(h\cdot k)- \frac{1}{2}(h^\perp\cdot k)\right]\frac{(1+ a^2 (k-h)^2)^s}{(1+a^2 k^2)^s} \w_h \w_{k-h}.
\end{equation}

\section{Gaussian invariant measures for the averaged-Euler equations}

The purpose of this section is to construct an infinite dimensional Wiener measure $\mu_\gamma$ defined  on some suitable $H^{p,s}$ space which is invariant for the averaged-Euler equation. We   study the $L^p_{\mu_\gamma}$ regularity of the averaged-Euler vector field $B$, which  is $\mu_\gamma$-divergence free. This latter property will unable us to construct a flow associated with $B$. We proceed as in \cite{AC} where the Euler equation is considered.

Consider the probability measures on $\C$ defined for $\gamma\in \R^+$ by
\begin{equation}
d\mu_{\gamma,k}(z)=\frac{\gamma k^4(1+a^2k^2)^{2s}}{2\pi}\exp\left\{-\frac{1}{2}\gamma k^4(1+a^2 k^2)^{2s}|z|^2\right\}dx dy
\end{equation}
where $z=x+iy$. Then
\begin{equation}\label{enstrophy_measure}
d\mu_\gamma(\varphi)=\prod_{k>0}d\mu_{\gamma,k}(\w_k)
\end{equation}
is a measure with support in $H^{1-\alpha,s}$ for any $\alpha>-\frac{s}{s+1}$; indeed,
\begin{align*}
\int \|\varphi\|^2_{1-\alpha,s} d\mu_\gamma(\varphi)&=\sum_{k>0}k^{2(1-\alpha)}(1+a^2k^2)^{s(1-\alpha)} \prod_{h>0}\int |\w_k|^2 d\mu_{\gamma,h}(\w_h) \\ 
&= \frac{2}{\gamma}\sum_{k>0} \frac{1}{k^{2(1+\alpha)}(1+a^2k^2)^{s(1+\alpha)}} <+\infty, \qquad \forall \alpha>-\frac{s}{s+1}.
\end{align*}

\begin{prop}\label{ab_W_space}
$(H^{1-\alpha,s}, H^{2,s}, \mu_\gamma)$ is a complex abstract Wiener space with measurable norm $\| \cdot\|_{1-\alpha,s}$ for any $\alpha>-\frac{s}{s+1}$.
\end{prop}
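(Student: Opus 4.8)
The plan is to exhibit the triple as a (complex) abstract Wiener space in the sense of Gross, taking the Cameron--Martin space to be $H=H^{2,s}$ equipped with the enstrophy inner product $<\cdot,\cdot>_{2,s}$ (up to the harmless rescaling by $\gamma$), the measurable norm to be $\|\cdot\|_{1-\alpha,s}$, and the ambient Banach space to be the completion of $H^{2,s}$ under that norm. First I would record that $\mu_\gamma$ is, up to a fixed scaling of its covariance, the canonical Gaussian cylinder measure of $H^{2,s}$. This is a short moment computation from the product formula \eqref{enstrophy_measure}: each $\mu_{\gamma,k}$ is a centred, rotation-invariant complex Gaussian on $\C$ with $\int|z|^2 d\mu_{\gamma,k}(z)=\tfrac{2}{\gamma}k^{-4}(1+a^2k^2)^{-2s}$ and $\int z^2 d\mu_{\gamma,k}(z)=0$, whence $\int\w_j\bar\w_k\,d\mu_\gamma=\delta_{jk}\tfrac{2}{\gamma}k^{-4}(1+a^2k^2)^{-2s}$ and $\int\w_j\w_k\,d\mu_\gamma=0$. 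In the orthonormal basis $f_k:=k^{-2}(1+a^2k^2)^{-s}e_k$, $k>0$, of $H^{2,s}$ this says precisely that the coordinate functionals $\varphi\mapsto<\varphi,f_k>_{2,s}$ are i.i.d.\ (complex) Gaussians, i.e.\ the reproducing kernel Hilbert space of $\mu_\gamma$ is $H^{2,s}$.

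The core of the argument is to show that the natural inclusion $\iota\colon H^{2,s}\hookrightarrow H^{1-\alpha,s}$ is Hilbert--Schmidt exactly when $\alpha>-\tfrac{s}{s+1}$. Using the basis $\{f_k\}_{k>0}$ above, $\|\iota\|_{\mathrm{HS}}^2=\sum_{k>0}\|f_k\|_{1-\alpha,s}^2=\sum_{k>0}\dfrac{k^{2(1-\alpha)}(1+a^2k^2)^{s(1-\alpha)}}{k^{4}(1+a^2k^2)^{2s}}=\sum_{k>0}\dfrac{1}{k^{2(1+\alpha)}(1+a^2k^2)^{s(1+\alpha)}}$, which is nothing but $\tfrac{\gamma}{2}\int\|\varphi\|_{1-\alpha,s}^2\,d\mu_\gamma(\varphi)$, the series already shown to be finite just before the statement. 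Since its general term decays like $|k|^{-2(1+\alpha)(1+s)}$ and the sum runs over the two-dimensional lattice of positive $k\in\Z^2$, it converges iff $(1+\alpha)(1+s)>1$, i.e.\ iff $\alpha>\tfrac1{s+1}-1=-\tfrac{s}{s+1}$. Moreover $\iota$ is injective and its range contains the trigonometric polynomials, which are dense in the separable Hilbert space $H^{1-\alpha,s}$; hence $H^{1-\alpha,s}$ is exactly the completion of $H^{2,s}$ with respect to $\|\cdot\|_{1-\alpha,s}$.

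With these two facts the conclusion is immediate from the classical abstract Wiener space machinery: a norm on a Hilbert space of the form $x\mapsto\|Tx\|$, with $T$ an injective Hilbert--Schmidt operator into another Hilbert space, is a measurable norm, so the canonical Gaussian cylinder measure on $H^{2,s}$ extends to a genuine Radon Gaussian measure on the completion $H^{1-\alpha,s}$, and by the covariance identity of the first step this extension is $\mu_\gamma$. For the \emph{complex} qualifier one only has to notice that everything is compatible with the complex structure $\w_k\mapsto i\w_k$: the norms $\|\cdot\|_{2,s}$ and $\|\cdot\|_{1-\alpha,s}$ depend on the coefficients only through $|\w_k|$, and each $\mu_{\gamma,k}$ is $U(1)$-invariant, so $\mu_\gamma$ is invariant under this complex structure and $(H^{1-\alpha,s},H^{2,s},\mu_\gamma)$ is a complex abstract Wiener space. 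I do not expect any real obstacle here; the only points needing attention are fixing the normalisation constant in the covariance correctly and checking that the Hilbert--Schmidt threshold for $\iota$ coincides with the stated range of $\alpha$ --- which it does, being exactly the condition under which $\|\varphi\|_{1-\alpha,s}\in L^2(\mu_\gamma)$.
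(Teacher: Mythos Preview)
Your proposal is correct and follows essentially the same approach as the paper. Both arguments reduce to the Hilbert--Schmidt criterion for measurability of the norm $\|\cdot\|_{1-\alpha,s}$, computing the identical series $\sum_{k>0} k^{-2(1+\alpha)}(1+a^2k^2)^{-s(1+\alpha)}$; the paper packages this via an operator $\Gamma\colon H^{2,s}\to H^{2,s}$ with $\|\Gamma u\|_{2,s}=\|u\|_{1-\alpha,s}$, whereas you use the inclusion $\iota\colon H^{2,s}\hookrightarrow H^{1-\alpha,s}$ directly, which is the same computation up to an isometry.
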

\begin{proof}
Consider the operator $\Gamma: H^{2,s}\to H^{2,s}$ defined by 
$$\Gamma e_k= \frac{1}{|k|^{1+\alpha}(1+a^2 k^2)^{(1+\alpha)s/2}} e_k,$$
which is a Hilbert-Schmidt operator  since
$$\|\Gamma\|_{H.S.}^2=\sum_{k>0} \frac{1}{k^{2(1+\alpha)}(1+a^2 k^2)^{(1+\alpha)s}}<+\infty, \qquad \forall \alpha>-\frac{s}{s+1}.
$$
Let now $u=\sum_k u_k e_k$ in $H^{2,s}$, then
$$
\|\Gamma u\|_{2,s}^2 = \sum_k k^{2(1-\alpha)}(1+a^2k^2)^{s(1-\alpha)} |u_k|^2=\|u\|^2_{1-\alpha,s}.
$$
Because  $\Gamma$ is a Hilbert-Schimdt operator such that $\|\Gamma u\|_{2,s}^2 = \|u\|^2_{1-\alpha,s}$ we can say that $\| \cdot\|_{1-\alpha,s}$ is a measurable norm in the sense of Gross (that is for every $\varepsilon>0$ there exists $P_0\in\mathcal{F}$, where $\mathcal{F}$ is the partially ordered set of finite dimensional orthogonal projection $P$ of the space $H^{2,s}$, such that $\mu_\gamma\{\|P u\|_{1-\alpha,s}>\varepsilon\}<\varepsilon,\, \forall P\perp P_0\in\mathcal{F}$). On the other hand $H^{1-\alpha,s}$ is the closure of $H^{2,s}$ with respect to the norm $\| \cdot\|_{1-\alpha,s}$, that is $(H^{1-\alpha,s}, H^{2,s}, \mu_\gamma)$ is a complex abstract Wiener space. Indeed $\mu_\gamma$ is a Wiener measure on $H^{1-\alpha,s}$, namely
$$
\int e^{i\gamma l(u)} d\mu_\gamma(u)=e^{-\frac{1}{2}\gamma\| l\|_{2,s}^2}, \qquad \forall l\in (H^{1-\alpha,s})'\subset H^{2,s}.
$$
\end{proof}
In particular $\Em(u_k)=0$, $\Em(u_k \bar u_{k'})=\frac{2\delta_{k,k'}}{\gamma k^4(1+a^2k^2)^{2s}}$ and $\Em(|u_k|^{2p})=\frac{2^p p!}{\gamma^p k^{4p}(1+a^2 k^2)^{2sp}}$ for $p\geq 1$. For further studies on abstract Wiener spaces and results similar to Proposition \ref{ab_W_space} see \cite{K}.

\subsection{Regularity of the vector field}\label{reg}

We are looking for solutions of
$$
\frac{d \w_k}{dt}=B_k(\varphi), \qquad \forall k>0
$$
that belong to $H^{1-\alpha,s}$ for all $t>0$. Let us prove that  $B: H^{1-\alpha,s} \to H^{1-\alpha,s}$, where $B$ is defined in \eqref{vector_fieldB}.
We can consider  finite dimensional approximations of $B$, namely
\begin{equation}\label{B_n}
B^n(\varphi)=\sum_{k\in \{\alpha_1, \ldots, \alpha_{d(n)}\}} B_k^n(\varphi) e_k(x),
\end{equation}
that are vector fields on $\C^d$, $d=d(n)$, and write 
\begin{equation}\label{B_n^k}
B_k^n(\varphi)=\sum_{h^2\leq n} \alpha_{h,k} \w_h \w_{k-h}
\end{equation}
where $\alpha_i \in \Z^2$ for $i\in\{1, \ldots, d\}$
and
$$
\alpha_{h,k}=\frac{1}{2\pi } \left[\frac{1}{k^2 }(h^\perp \cdot k)(h\cdot k)- \frac{1}{2}(h^\perp\cdot k)\right]\frac{(1+ a^2 (k-h)^2)^s}{(1+a^2 k^2)^s}.
$$

\begin{prop}\label{B_regularity}
The vector field $B\in L^p_{\mu_\gamma}(H^{1-\alpha,s};H^{1-\alpha,s})$ for all $\alpha>2$ and $p\geq 1$.
\end{prop}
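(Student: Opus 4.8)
The plan is to reduce the statement, for all $p\ge1$ simultaneously, to the single scalar bound $\sum_{k>0}c_k\,\Em\bigl(|B_k(\varphi)|^2\bigr)<+\infty$ with weights $c_k:=k^{2(1-\alpha)}(1+a^2k^2)^{s(1-\alpha)}$, and then to derive it from \eqref{vector_field}. For the reduction: by \eqref{vector_field} each $B_k$ is a mean-zero quadratic expression in the independent, circularly symmetric complex Gaussians $(\w_h)_{h>0}$ (one uses $\Em(\w_h)=\Em(\w_h^2)=0$ and the convention $\w_{k-h}=\overline{\w_{h-k}}$ when $k-h$ fails to be positive), hence $B_k$ lies in the second Wiener chaos of $\mu_\gamma$; Nelson's hypercontractivity — equivalently, the elementary moment bounds for homogeneous quadratic forms in Gaussians — then gives $\|B_k\|_{L^p(\mu_\gamma)}\le C(p)\,\|B_k\|_{L^2(\mu_\gamma)}$ for every $p\ge1$. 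For $p\ge2$, Minkowski's inequality in $L^{p/2}(\mu_\gamma)$ applied to $\|B(\varphi)\|_{1-\alpha,s}^2=\sum_{k>0}c_k|B_k(\varphi)|^2$ yields
$$
\bigl(\Em\|B\|_{1-\alpha,s}^p\bigr)^{2/p}\ \le\ \sum_{k>0}c_k\,\| |B_k|^2\|_{L^{p/2}(\mu_\gamma)}\ =\ \sum_{k>0}c_k\,\|B_k\|_{L^p(\mu_\gamma)}^2\ \le\ C(p)^2\sum_{k>0}c_k\,\Em|B_k|^2,
$$
and for $1\le p<2$ the claim follows from the case $p=2$ by Jensen's inequality; so it suffices to bound $\sum_{k>0}c_k\,\Em|B_k|^2$.

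For the second moments I would expand $|B_k|^2$ from \eqref{vector_field} and take expectations: independence and circular symmetry kill every non-diagonal pairing, so with $\Em|\w_h|^2=\tfrac{2}{\gamma h^4(1+a^2h^2)^{2s}}$ (and $\Em|\w_h|^4=2(\Em|\w_h|^2)^2$ for the exceptional index $h=k-h$) one gets
$$
\Em|B_k(\varphi)|^2\ \le\ \frac{C}{\gamma^2}\sum_{h>0}|\alpha_{h,k}|^2\,\frac{1}{h^4(1+a^2h^2)^{2s}}\cdot\frac{1}{(k-h)^4\,(1+a^2(k-h)^2)^{2s}}\,;
$$
finiteness of the right-hand side also shows $B_k\in L^2(\mu_\gamma)$, that $B=\sum_k B_k e_k$ is $\mu_\gamma$-a.e.\ well defined in $H^{1-\alpha,s}$, and that the truncations $B^n$ of \eqref{B_n} converge to $B$. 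I would then insert the crude bound $|\alpha_{h,k}|\le C\bigl(|h|^2+|h|\,|k|\bigr)(1+a^2(k-h)^2)^{s}(1+a^2k^2)^{-s}$, coming from $|h^\perp\cdot k|,\,|h\cdot k|\le|h|\,|k|$, which cancels the factor $(1+a^2(k-h)^2)^{2s}$ and leaves
$$
\Em|B_k|^2\ \le\ \frac{C}{\gamma^2(1+a^2k^2)^{2s}}\sum_{h>0}\frac{(|h|+|k|)^2}{h^2\,(1+a^2h^2)^{2s}\,(k-h)^4}\,.
$$

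The last sum I would split over $\{|h|\le|k|/2\}$ and $\{|h|>|k|/2\}$. On the first set $|k-h|\ge|k|/2$, so $(k-h)^{-4}\le C|k|^{-4}$ and $(|h|+|k|)^2\le C|k|^2$, and that part is $\le C|k|^{-2}\sum_{h>0}h^{-2}(1+a^2h^2)^{-2s}\le C|k|^{-2}$, the series converging since $2+4s>2$; on the second set $(|h|+|k|)^2/|h|^2\le C$ and, since $t\mapsto(1+a^2t)^{-2s}$ decreases, $(1+a^2h^2)^{-2s}\le C(1+a^2k^2)^{-2s}$, so that part is $\le C(1+a^2k^2)^{-2s}\sum_{m\ne0}|m|^{-4}\le C(1+a^2k^2)^{-2s}$. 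Hence $\Em|B_k|^2\le C\gamma^{-2}(1+a^2k^2)^{-2s}$, whence
$$
\sum_{k>0}c_k\,\Em|B_k|^2\ \le\ \frac{C}{\gamma^2}\sum_{k>0}k^{2(1-\alpha)}(1+a^2k^2)^{-s(1+\alpha)}\ \le\ \frac{C}{\gamma^2}\sum_{k>0}k^{2(1-\alpha)}\ <\ +\infty
$$
for every $\alpha>2$, since a series $\sum_{k\in\Z^2}|k|^{2(1-\alpha)}$ converges exactly when $2(1-\alpha)<-2$, which also pins down the threshold $\alpha>2$. The delicate point I expect is precisely this last bookkeeping: one must verify that the pseudodifferential amplitude $(1+a^2(k-h)^2)^{s}(1+a^2k^2)^{-s}$ in $\alpha_{h,k}$ — which is large when $|k-h|\gg|k|$ — does not destroy the decay (it is dominated by the fast decay $\Em|\w_{k-h}|^2\sim|k-h|^{-4-4s}$ with which it is paired), and that enough decay in $|k|$ survives to close the $\Z^2$-series.
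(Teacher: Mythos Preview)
Your proof is correct and reaches the same core second-moment estimate as the paper, but the passage from $L^2$ to general $L^p$ is organized differently. The paper works directly with $\Em|B_k|^{2p}$: after a H\"older step it expands the $2p$-fold product and uses $\Em(\omega_h^{\,p})=0$ (stated for $p$ odd, whence the reduction to $L^{2p}$ with $p$ odd) to kill the off-diagonal pairings, arriving at
\[
\bigl(\Em|B_k|^{2p}\bigr)^{1/p}\ \le\ C(p,\gamma)\sum_{h\ne 0,k}\frac{|\alpha_{h,k}|^2}{h^4(1+a^2h^2)^{2s}(k-h)^4(1+a^2(k-h)^2)^{2s}},
\]
which is exactly your $L^2$ bound with a $p$-dependent constant. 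Your route --- recognizing each $B_k$ as a second Wiener chaos element and invoking hypercontractivity to get $\|B_k\|_{L^p}\le C(p)\|B_k\|_{L^2}$, then Minkowski in $L^{p/2}$ --- is more conceptual: it explains structurally why all moments are governed by the second, and it avoids the somewhat ad hoc restriction to odd $p$. The paper's computation, on the other hand, is entirely self-contained and uses only elementary Gaussian moment identities rather than Nelson's inequality. Your final summability check, splitting $|h|\le|k|/2$ versus $|h|>|k|/2$ to track how the amplitude $(1+a^2(k-h)^2)^s(1+a^2k^2)^{-s}$ in $\alpha_{h,k}$ is absorbed by the decay of $\Em|\omega_{k-h}|^2$, is also more explicit than the paper, which writes down the resulting double series and simply asserts convergence for $\alpha>2$.
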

\begin{proof}
It is sufficient to prove that $B\in L^{2p}_{\mu_\gamma}(H^{1-\alpha,s};H^{1-\alpha,s})$ for all $\alpha>2$ with $p$ odd.
\begin{align*}
\Em&\|B(\varphi)\|^{2p}_{1-\alpha,s}=\Em\left( \sum_k k^{2(1-\alpha)}(1+a^2k^2)^{(1-\alpha)s}|B_k(\varphi)|^2\right)^p \\
&=\Em \sum_{k_1,..., k_p} \prod_{i=1}^p k_i^{2(1-\alpha)}(1+a^2k_i^2)^{(1-\alpha)s} |B_{k_i}(\varphi)|^2 \\
&\leq \sum_{k_1,..., k_p} \prod_{i=1}^p k_i^{2(1-\alpha)}(1+a^2k_i^2)^{(1-\alpha)s} \left(\Em |B_{k_i}(\varphi)|^{2p} \right)^{1/p} \\
&=\left[ \sum_{k}  k^{2(1-\alpha)}(1+a^2k^2)^{(1-\alpha)s} \left(\Em |B_{k}(\varphi)|^{2p} \right)^{1/p}\right]^p.
\end{align*} 
From $B_{k}(\varphi)=\sum_h \alpha_{h,k}\w_h \w_{k-h}$ we get that
\begin{align*}
\Em |B_k(\varphi)|^{2p}&=\Em\left( \sum_{\substack{h_1, ... h_p \\ \h_1, ... \h_p}} \prod_{i=1}^p \alpha_{h_i,k}\alpha_{\h_i,k} \w_{h_i}\w_{k-h_i} \bar\w_{\h_i}\bar\w_{k-\h_i} \right) \\
&\leq \left[ \sum_{h,\h} \alpha_{h,k}\alpha_{\h,k} (\Em(\w_{h}\w_{k-h} \bar\w_{\h}\bar\w_{k-\h} )^p)^{1/p}\right]^p.
\end{align*}
Observe that, if $h\neq\h$ or $h\neq k-\h$,
$$
\Em(\w_{h}\w_{k-h} \bar\w_{\h}\bar\w_{k-\h} )^p=\Em(\w_{h})^p\Em( \w_{k-h} \bar\w_{\h}\bar\w_{k-\h})^p
$$
and for $p$ odd $\Em(\w_{h})^p=0$. Hence we have  
\begin{align*}
 \sum_{h,\h\neq 0,k}  &\alpha_{h,k}\alpha_{\h,k} (\Em(\w_{h}\w_{k-h} \bar\w_{\h}\bar\w_{k-\h} )^p)^{1/p} \\
&=  \sum_{h,\h\neq 0,k} (\delta_{h,k-\h}\alpha_{h,k}\alpha_{\h,k} + \delta_{h,\h}\alpha_{h,k}\alpha_{\h,k})(\Em (|\w_h|^{2p}|\w_{k-h}|^{2p}))^{1/p} \\
& \leq 2\sum_{h\neq 0,k} |\alpha_{h,k}|^2(\Em |\w_h|^{2p})^{1/p}(\Em |\w_{k-h}|^{2p})^{1/p} \\
& =  2\sum_{h\neq 0,k}|\alpha_{h,k}|^2 \frac{(2^p p!)^{1/p}}{\gamma h^4(1+a^2h^2)^{2s}} \frac{(2^p p!)^{1/p}}{\gamma (k-h)^4(1+a^2(k-h)^2)^{2s}} \\
& \leq c(p,\gamma)\frac{1}{(1+a^2k^2)^{2s}} \sum_{h\neq 0,k} \left[ \frac{1}{4h^2(k-h)^2(1+a^2h^2)^{2s}}\right]
\end{align*}
and thus
\begin{align*}
\sum_{k}  &k^{2(1-\alpha)}(1+a^2k^2)^{(1-\alpha)s} \left(\Em |B_{k}(\varphi)|^{2p} \right)^{1/p} \\
&\leq c(p,\gamma) \sum_{k}  k^{2(1-\alpha)}(1+a^2k^2)^{-s(1+\alpha)}  \sum_{h\neq 0,k} \left[ \frac{1}{4h^2(k-h)^2(1+a^2h^2)^{2s}}\right]
\end{align*}
where the series converge for $\alpha>2$.
\end{proof}

\begin{cor}\label{convergence***}
The convergence $\lim_{n\to +\infty}B^n= B$ holds in $L^2_{\mu_\gamma}(H^{1-\alpha,s}; H^{1-\alpha,s})$. 
\end{cor}
\begin{proof}
To show this statement observe that
\begin{equation*}
\Em(\|B^n(\varphi)-B(\varphi)\|^2_{1-\alpha,s})=\sum_{k>0}  k^{2(1-\alpha)}(1+a^2k^2)^{(1-\alpha)s} \Em(|B^n_k(\varphi)-B_k(\varphi)|^2 )\leq \varepsilon
\end{equation*} 
for $\alpha>2$ and $n$ sufficiently big. In fact $\Em(|B^n_k(\varphi)-B_k(\varphi)|^2 )$ is infinitesimal for $n$ sufficiently big, since $\lim_{n\to +\infty}B^n_k(\varphi)= B_k(\varphi)$ for a.e $\varphi\in H^{1-\alpha,s}$ and $B^n_k$ is a Cauchy sequence in $L^2_{\mu_\gamma}(H^{1-\alpha,s}; \C)$, that is for $0<n<m$
\begin{align*}
\Em|B^n_k&(\varphi)-B^m_k(\varphi)|^2=\sum_{\substack{n\leq h^2\leq m \\ n\leq \h^2\leq m }} \alpha_{h,\h} \alpha_{\h,k} \Em(\w_h\w_{k-h}\bar\w_{\h} \bar\w_{k-\h})\\
&= \frac{4}{\gamma^2} \sum_{\substack{n\leq h^2\leq m \\ n\leq \h^2\leq m }} \alpha_{h,\h} \alpha_{\h,k} \frac{(\delta_{h,\h}  + \delta_{h, k-\h})}{h^4(1+a^2h^2)^{2s}(k-h)^4(1+a^2(k-h)^2)^{2s}}\\
&\leq \frac{8}{\gamma^2}\sum_{n\leq h^2\leq m}\frac{ |\alpha_{h,k}|^2 }{h^4(1+a^2h^2)^{2s}(k-h)^4(1+a^2(k-h)^2)^{2s}}<+\infty
\end{align*}
as we saw above.
\end{proof}

We shall consider the gradient operator in the sense of Malliavin calculus (c.f. \cite{Ma}), that is, for $\psi: H^{1-\alpha,s} \to X$ where $X$ is a Banach space, $\nabla \psi (u)$ is defined, for $u\in H^{1-\alpha,s}$, by 
$$
\nabla \psi (u)(v)=D_v\psi (u)=\lim_{\varepsilon \to 0} \frac{1}{\varepsilon}[\psi(u+\varepsilon v)-\psi(u)], \quad v\in  H^{2,s}
$$
where the limit is taken $\mu_\gamma$-a.e. in $H^{1-\alpha,s}$.
The second derivative is defined by iteration of the first, that is $\nabla^2 \psi (u)(v,w)=D_vD_w\psi(u)$ for $u\in H^{1-\alpha,s}$ and $v,w \in H^{2,s}$, etc. Observe that the successive gradients $\nabla^r \psi(u)$ belong to $H^{2,s}_{\otimes^r}$ for $r\geq 1$. On the symmetric tensorial product $H^{2,s}_{\otimes^r}=H^{2,s}\otimes\cdots\otimes H^{2,s}$ (r times) we consider the Hilbert-Schmidt norm. 

For example, we can check that
\begin{equation}\label{first_derivative_B}
D_{e_j}B(\varphi)=\sum_{k>0}(\alpha_{j,k}+ \alpha_{k-j,k})\w_{k-j}e_k
\end{equation}
and
\begin{equation}\label{second_derivative_B}
D_{e_i}D_{e_j}B(\varphi)=\sum_{\substack{k=i+j, \\ k>0}}(\alpha_{j,k}+ \alpha_{i,k})e_{k}.
\end{equation}
Hence given $\{\hat e_k\}_{k\in \Z^2}$ an orthonormal basis of $H^{2,s}$, namely $\hat e_k=\frac{e_k}{k^2(1+a^2k^2)^s}$, the Hilbert-Schmidt norms of $\nabla B$ and $\nabla^2 B$ are respectively
\begin{align}\label{HS_grad}
\|\nabla B(\varphi)\|^2_{H.S.}&=\sum_j\|\nabla B(\varphi)(\hat e_j)\|^2_{1-\alpha,s}  \nonumber\\
&= \sum_{j,k}\frac{k^{2(1-\alpha)}(1+a^2k^2)^{s(1-\alpha)}}{j^4(1+a^2j^2)^{2s}}
(\alpha_{j,k}+ \alpha_{k-j,k})^2|\w_{k-j}|^2
\end{align}
and
\begin{align}\label{D^2norm}
\|\nabla^2 B(\varphi)\|^2_{H.S.}&=\sum_{i,j}\| D_{\hat e_i}D_{\hat e_j}B(\varphi)\|^2_{1-\alpha,s} \nonumber \\
&= \sum_{i,j} \frac{(i+j)^{2(1-\alpha)}(1+a^2(i+j)^2)^{(1-\alpha)s}}{i^4j^4(1+a^2i^2)^{2s}(1+a^2j^2)^{2s}} (\alpha_{j,i+j}+ \alpha_{i,i+j})^2.
\end{align}

\begin{prop}\label{reg_B_derivative}
For all $\alpha>2$ and $p\geq 1$, $\nabla B\in L^p_{\mu_\gamma}(H^{1-\alpha,s}; H.S.(H^{2,s},H^{1-\alpha,s}))$ and $\nabla^2 B\in L^p_{\mu_\gamma}(H^{1-\alpha,s}; H.S.(H^{2,s}\otimes H^{2,s} ,H^{1-\alpha,s}))$.
\end{prop}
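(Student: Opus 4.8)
The plan is to mimic the proof of Proposition~\ref{B_regularity}, now taking as starting point the explicit expressions for the Hilbert--Schmidt norms of $\nabla B$ and $\nabla^2 B$: the displayed formula for $\|\nabla B(\varphi)\|^2_{H.S.}$ and formula \eqref{D^2norm}. The whole argument will reduce, as before, to the convergence of explicit series in the parameter $\alpha>2$, combined with the Gaussian moment identities $\Em(|\w_k|^{2p})=\frac{2^pp!}{\gamma^pk^{4p}(1+a^2k^2)^{2sp}}$ recorded after Proposition~\ref{ab_W_space}, and with the elementary bound on the coefficients $\alpha_{h,k}$ obtained from $|h^\perp\cdot k|\le|h|\,|k|$ and $|h\cdot k|\le|h|\,|k|$, namely $|\alpha_{h,k}|\le c\,(|h|^2+|h|\,|k|)\,\frac{(1+a^2(k-h)^2)^s}{(1+a^2k^2)^s}$.

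First I would dispose of $\nabla^2 B$, which is the easier case since the right-hand side of \eqref{D^2norm} contains no $\w_k$: it is a deterministic quantity. Bounding $(\alpha_{j,i+j}+\alpha_{i,i+j})^2$ by $2\alpha_{j,i+j}^2+2\alpha_{i,i+j}^2$ and using the estimate on $\alpha_{h,k}$ above (with $k=i+j$ and $h=j$, resp.\ $h=i$), the factor $(1+a^2i^2)^{2s}$, resp.\ $(1+a^2j^2)^{2s}$, cancels against one of the denominators in \eqref{D^2norm}; what is left is a double series over $i,j$ whose general term decays like $(i+j)^{2(1-\alpha)}(1+a^2(i+j)^2)^{-(1+\alpha)s}$ times bounded rational factors, hence converges for every $\alpha>2$. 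Thus $\|\nabla^2 B(\varphi)\|_{H.S.}$ is bounded by a constant and a fortiori $\nabla^2 B\in L^p_{\mu_\gamma}(H^{1-\alpha,s};H.S.(H^{2,s}\otimes H^{2,s},H^{1-\alpha,s}))$ for all $p\ge1$.

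For $\nabla B$ I would proceed exactly as in Proposition~\ref{B_regularity}: it is enough to bound $\Em\|\nabla B(\varphi)\|^{2p}_{H.S.}$ for $p$ odd. Writing $\|\nabla B(\varphi)\|^2_{H.S.}=\sum_{j,k}c_{j,k}\,|\w_{k-j}|^2$ with $c_{j,k}=k^{2(1-\alpha)}(1+a^2k^2)^{(1-\alpha)s}\,j^{-4}(1+a^2j^2)^{-2s}\,(\alpha_{j,k}+\alpha_{k-j,k})^2$, expanding the $p$-th power into a $p$-fold sum and applying Minkowski's and H\"older's inequalities as in that proof, one bounds $\Em\|\nabla B(\varphi)\|^{2p}_{H.S.}$ by $\big(\sum_{j,k}c_{j,k}(\Em|\w_{k-j}|^{2p})^{1/p}\big)^p$. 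Inserting $(\Em|\w_{k-j}|^{2p})^{1/p}=2(p!)^{1/p}\gamma^{-1}(k-j)^{-4}(1+a^2(k-j)^2)^{-2s}$, substituting $h=k-j$ and bounding $(\alpha_{j,k}+\alpha_{k-j,k})^2$ via the estimate on $\alpha_{h,k}$, the problem is reduced to the convergence, for $\alpha>2$, of a series whose general term is dominated by $k^{2(1-\alpha)}(1+a^2k^2)^{-(1+\alpha)s}(1+a^2h^2)^{-2s}h^{-4}(k-h)^{-4}\,(|h|^2+|k|^2)(|h|^2+|k-h|^2)$; this is a strictly better-behaved variant of the series already estimated for $B$ in Proposition~\ref{B_regularity}, the additional weight $j^{-4}(1+a^2j^2)^{-2s}$ from the orthonormal basis $\hat e_j$ of $H^{2,s}$ only improving convergence.

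The only delicate point --- as in Proposition~\ref{B_regularity} --- will be purely one of bookkeeping in these multi-index series, and in particular the control of the ratio $(1+a^2(k-h)^2)^s/(1+a^2k^2)^s$ entering $\alpha_{h,k}$, which exceeds $1$ when $|k-h|>|k|$. This apparent loss is harmless: the numerator $(1+a^2(k-h)^2)^s$ is always absorbed, for $\nabla B$ by the factor $(1+a^2(k-h)^2)^{-2s}$ coming from $\Em|\w_{k-h}|^{2p}$, and for $\nabla^2 B$ by the factor $(1+a^2i^2)^{-2s}$ coming from the $\hat e$-basis, so that after the substitution every power of $(1+a^2\,\cdot\,)$ has a non-positive exponent and the residual polynomial growth is controlled by $k^{2(1-\alpha)}$ with $\alpha>2$. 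No genuinely new estimate beyond those of Proposition~\ref{B_regularity} is required.
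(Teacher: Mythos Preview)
Your proposal is correct and follows essentially the same route as the paper's own proof: expand $\|\nabla B\|_{H.S.}^{2p}$ as a $p$-fold sum, apply H\"older to pull out $(\Em|\w_{k-j}|^{2p})^{1/p}$, insert the Gaussian moment, and reduce to the convergence of the same series already controlled in Proposition~\ref{B_regularity}; for $\nabla^2 B$ both you and the paper simply observe that \eqref{D^2norm} is deterministic and convergent for $\alpha>2$. One small remark: your restriction to $p$ odd is unnecessary here (unlike in Proposition~\ref{B_regularity}, the expression $\|\nabla B\|_{H.S.}^2$ involves only $|\w_{k-j}|^2$, so no cancellation of odd moments is needed), but it does no harm.
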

\begin{proof}
\begin{align*}
\Em\|\nabla &B(\varphi)\|^{2p}_{H.S.}= \Em(\|\nabla B(\varphi)\|^{2}_{H.S.})^p\\
& =  \Em\left[ \sum_{j,k}\frac{k^{2(1-\alpha)}(1+a^2k^2)^{s(1-\alpha)}}{j^4(1+a^2j^2)^{2s}}
(\alpha_{j,k}+ \alpha_{k-j,k})^2|\w_{k-j}|^2 \right]^p \\
&= \Em \sum_{\substack{k_1, ..., k_p \\ j_1, ...,j_p}} \prod_{i=1}^p \frac{k_i^{2(1-\alpha)}(1+a^2k_i^2)^{s(1-\alpha)}}{j_i^4(1+a^2j_i^2)^{2s}}
(\alpha_{j_i,k_i}+ \alpha_{k_i-j_i,k_i})^2|\w_{k_i-j_i}|^2 \\
&\leq \sum_{\substack{k_1, ..., k_p \\ j_1, ...,j_p}} \prod_{i=1}^p \frac{k_i^{2(1-\alpha)}(1+a^2k_i^2)^{s(1-\alpha)}}{j_i^4(1+a^2j_i^2)^{2s}}
(\alpha_{j_i,k_i}+ \alpha_{k_i-j_i,k_i})^2( \Em |\w_{k_i-j_i}|^{2p})^{1/p}\\
&=\left[ \sum_{j,k}\frac{k^{2(1-\alpha)}(1+a^2k^2)^{s(1-\alpha)}}{j^4(1+a^2j^2)^{2s}}
(\alpha_{j,k}+ \alpha_{k-j,k})^2 \frac{(2^p p!)^{1/p}}{\gamma (k-j)^4(1+a^2(k-j)^2)^{2s}}\right]^p\\
&=\frac{(2^p p!)}{\gamma^p} C^p<+\infty
\end{align*}
As we have shown in the proof of Proposition \ref{B_regularity}, the series above are convergent for every $\alpha>2$ and $p\geq 1$.
For the second derivative of $B$, the statement  follows straightforward from the fact that \eqref{D^2norm} converges for every $\alpha>2$ and $p\geq 1$.
\end{proof}

\subsection{The vector field is divergence free}
Recall that on an abstract Wiener space $(X,H,\mu_\gamma)$, the divergence of a vector field $\Psi: X \to G$, $\Psi\in L^2_{\mu_\gamma}(X;G)$, where $G$ is a Hilbert space, is defined by 
\begin{equation}\label{divergence*}
\int \delta_{\mu_\gamma}\Psi \cdot f d\mu_\gamma=\int (\Psi ,\nabla f )_G d\mu_\gamma,
\quad \forall f\in \mathcal{D}
\end{equation}
where $\mathcal{D}$ is the space of differentiable functions on $X$ depending on a finite number of coordinates, that is $f(u)=f(u_{\alpha_1},..., u_{\alpha_d})$ where $d=d(n)$ and $(\cdot,\cdot)_G$ is the inner product of $G$.

Following \cite{AC}, where the Euler equation is treated, we show that the averaged-Euler vector field is $\mu_\gamma$-divergence free.

\begin{thm}\label{divergence_free}
For $\alpha>2$ the vector field $B: H^{1-\alpha,s} \to H^{1-\alpha,s}$ defined above is divergence free with respect to the measure $\mu_\gamma$, that is $\delta_{\mu_\gamma} B=0$.
\end{thm}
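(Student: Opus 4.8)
The plan is to verify the defining identity \eqref{divergence*} for $B$, i.e. to show that $\int (B,\nabla f)\,d\mu_\gamma = 0$ for every $f\in\mathcal D$. Since such an $f$ depends only on finitely many Fourier coordinates, $\nabla f$ is supported on a finite set $I$ of modes, so the pairing only involves the components $B_k$ with $k\in I$. First I would pass to the finite-dimensional approximations $B^n$ of \eqref{B_n}: by Corollary \ref{convergence}, $B^n\to B$ in $L^2_{\mu_\gamma}(H^{1-\alpha,s};H^{1-\alpha,s})$, and since $\nabla f$ depends on finitely many variables and lies in $L^2_{\mu_\gamma}$, the Cauchy--Schwarz inequality gives $\int(B,\nabla f)\,d\mu_\gamma = \lim_{n\to\infty}\int(B^n,\nabla f)\,d\mu_\gamma$; it thus suffices to treat each $B^n$, a polynomial vector field on a finite-dimensional Gaussian space. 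On that space, integrating by parts in each variable $\omega_k$, $k\in I$ — legitimate because the $B^n_k$ are quadratic, hence lie in every $L^p_{\mu_\gamma}$, so there is no boundary contribution — rewrites $\int(B^n,\nabla f)\,d\mu_\gamma$ as $\int \delta_{\mu_\gamma}B^n\cdot f\,d\mu_\gamma$, where $\delta_{\mu_\gamma}B^n$ splits into a Liouville (Jacobian) part $\sum_k \partial_{\omega_k}B^n_k$ and a drift part produced by differentiating the Gaussian density $\propto \exp\{-\tfrac12\gamma\sum_k k^4(1+a^2k^2)^{2s}|\omega_k|^2\}$, namely $\gamma\sum_k k^4(1+a^2k^2)^{2s}\,\mathrm{Re}(\bar\omega_k B^n_k)=\gamma\,\mathrm{Re}\langle B^n(\varphi),\varphi\rangle_{2,s}$.

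I would then show that both parts vanish. For the Liouville part, $B^n_k=\sum_h\alpha_{h,k}\omega_h\omega_{k-h}$ contains no monomial in $\omega_k$: such a term would require a vanishing zero mode $\omega_0$, and the only possible coefficients $\alpha_{k,k}$ and $\alpha_{2k,k}$ are zero anyway because $k^\perp\cdot k=0$; hence $\partial_{\omega_k}B^n_k\equiv 0$, so $B^n_k$ does not depend on the $k$-th coordinate at all. For the drift part, $\mathrm{Re}\langle B(\varphi),\varphi\rangle_{2,s}$ is exactly the instantaneous rate of change of the enstrophy $S=\tfrac12\|\varphi\|_{2,s}^2$ along the averaged-Euler vector field, and by the computation of Section 2.1 it equals $-(\nabla^\perp\varphi\cdot\nabla\varphi,\,A\Delta A\Delta\varphi)=0$, since $\nabla^\perp\varphi\cdot\nabla\varphi\equiv 0$ — an identity between cubic polynomials in the $\omega_k$ that restricts to the truncations. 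Combining, $\int(B^n,\nabla f)\,d\mu_\gamma\to 0$, so $\int(B,\nabla f)\,d\mu_\gamma=0$ and $\delta_{\mu_\gamma}B=0$.

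The point requiring care is that the enstrophy cancellation is not term-by-term: for each interacting triad $p+q=m$ it couples the three outputs $k=p$, $k=q$, $k=m$, so one must make sure the truncation retains all of them before invoking $\tfrac{d}{dt}S=0$. This is automatic here: any cubic monomial occurring in $f$ sits on three modes forming such a triad, all of which lie in $I=\mathrm{supp}(f)$, so for $n$ large the relevant sums are complete and the cancellation occurs — equivalently, the Galerkin-type truncations conserve the truncated enstrophy and are therefore $\mu_\gamma$-divergence free outright, and Corollary \ref{convergence} transfers the property to $B$ in the limit. The remaining verifications — that the Jacobian terms are genuinely zero and that the Gaussian integrations by parts produce no boundary terms — are immediate from the polynomial nature of the $B^n$.
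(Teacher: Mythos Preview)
Your argument is correct and follows essentially the same route as the paper: reduce to the finite-dimensional approximations $B^n$ via Corollary~\ref{convergence}, split $\delta_{\mu_\gamma^n}B^n$ into the Euclidean divergence $\sum_k D_{e_k}B^n_k$ (which vanishes since $B^n_k$ has no $\omega_k$-dependence) and the drift term $-\gamma\langle B^n(\varphi),\varphi\rangle_{2,s}$ (which vanishes by conservation of the enstrophy), and pass to the limit. Your additional paragraph about triad closure under truncation is a welcome clarification of a point the paper simply asserts (``the last equality holds by the conservation of the enstrophy''), but the overall architecture is the same.
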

\begin{proof}
Consider $d\mu_\gamma^n = \prod_{k\in\{\alpha_1, ...,\alpha_d\}} d\mu_{\gamma,k}$ where $d=d(n)$ and denote by $\rho_\gamma^n$ the density of this measure with respect to the Lebesgue measure.
From the definition of divergence of a vector field and the fact that $B^n$ converges to $B$ in $L^2_{\mu_\gamma}(H^{1-\alpha,s}; H^{1-\alpha,s})$ when $n$ goes to infinity, we have, for any $f\in \mathcal{D}$,
\begin{align*}
\int \delta_{\mu_\gamma}&B \cdot f d\mu_\gamma(\varphi)=\int <B ,\nabla f >_{1-\alpha,s} d\mu_\gamma(\varphi)\\
& = \lim_n \int \sum_{k>0} k^{2(1-\alpha) }(1+a^2k^2)^{s(1-\alpha)}B^n_k \overline{(\nabla f)_k }d\mu_\gamma^n(\varphi) =  \lim_n \int <B^n \rho_\gamma^n, \nabla g >_{\C^d}dz
\end{align*}
where $g\in \mathcal{D}$ is defined by $g_k=k^{2(1-\alpha)} (1+a^2k^2)^{s(1-\alpha)}f_k$ for all $k\in\{\alpha_1, \ldots, \alpha_{d(n)}\}$. Therefore, for all $g\in \mathcal{D}$, we have 
$$\lim_n \int  \div (B^n \rho_\gamma^n)  g dz = \lim_n \int \left[  \div B^n + <B^n, \frac{\nabla \rho_\gamma^n}{\rho_\gamma^n}>_{\C^d}\right] g d\mu_\gamma^n(\varphi).$$

In particular,
$$
\delta_{\mu_\gamma^n} B^n= \div B^n + <B^n, \frac{\nabla \rho_\gamma^n}{\rho_\gamma^n}>_{\C^d}=0,
$$
since on one hand, from the definition of $B^n$ (equations \eqref{B_n} and \eqref{B_n^k})
$$
\div B^n(\varphi)= \sum_{k\in \{\alpha_1, ..., \alpha_{d(n)}\}} D_{e_k} B^n_k(\varphi)=0
$$
and on the other hand, 
\begin{align*}
<B^n, \frac{\nabla \rho_\gamma^n}{\rho_\gamma^n}>_{\C^d}
= -\gamma<B^n(\varphi), \varphi>_{2,s}
=0
\end{align*}
where the last equality holds by the conservation of the enstrophy.
Therefore $\delta_{\mu_\gamma^n} B^n=0$ for all $n\in \N$ and  $\delta_{\mu_\gamma} B=0$.
\end{proof}

Using the fact that $B$ belongs to $L^2_{\mu_\gamma}$  and has divergence zero (with respect to the measure $\mu_\gamma$), it is possible to construct a flow associated to $B$ for which  $\mu_\gamma$ is an invariant measure.

\begin{lem}\label{finite_dimensional_result}
There exists a unique solution of $\frac{dU^n(t,\varphi^n)}{dt}=B^n(U^n(t,\varphi^n)),$ $U^n(0,\varphi^n)=\varphi^n(0)$ which is defined for all times.
\end{lem}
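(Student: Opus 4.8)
The plan is to combine the classical Cauchy--Lipschitz theorem with the a priori bound furnished by conservation of the truncated enstrophy; since everything here takes place in finite dimension, the argument is short.

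First I would observe that $B^n$ is a quadratic vector field on the finite-dimensional complex space $\C^d$, $d=d(n)$: by \eqref{B_n} and \eqref{B_n^k} each component $B^n_k$ is a homogeneous degree-two polynomial in the coordinates $\w_{\alpha_1},\dots,\w_{\alpha_{d(n)}}$ with constant coefficients $\alpha_{h,k}$. Hence $B^n$ is $C^\infty$, in particular locally Lipschitz, and the Picard--Lindel\"{o}f theorem yields, for each initial datum $\varphi^n(0)\in\C^d$, a unique maximal solution $U^n(\cdot,\varphi^n)$ on a maximal open interval $(t_-,t_+)\ni 0$; local uniqueness together with the usual continuation argument gives uniqueness on the whole interval of existence.

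Next I would show the solution cannot blow up. Along the trajectory, using $\frac{d}{dt}|\w_k|^2=2\,\mathrm{Re}(\bar\w_k\dot\w_k)$ and $\dot\w_k=B^n_k(U^n)$, one gets
$$
\frac{d}{dt}\|U^n(t,\varphi^n)\|_{2,s}^2=2\,\mathrm{Re}\,<B^n(U^n(t,\varphi^n)),U^n(t,\varphi^n)>_{2,s}=0,
$$
the last equality being exactly the pointwise identity $<B^n(\varphi),\varphi>_{2,s}=0$ already used in the proof of Theorem \ref{divergence_free}, i.e. the finite-dimensional form of the conservation of the enstrophy. Therefore $\|U^n(t,\varphi^n)\|_{2,s}$ is constant and equal to $\|\varphi^n(0)\|_{2,s}$, so the trajectory stays on a sphere of the norm $\|\cdot\|_{2,s}$, which is a compact subset of $\C^d$. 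By the standard blow-up criterion for maximal ODE solutions (if $t_+<+\infty$ the solution must leave every compact set as $t\uparrow t_+$, and symmetrically at $t_-$) we conclude $t_\pm=\pm\infty$, hence the solution is global.

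I do not expect a genuine obstacle: the only non-bookkeeping ingredient is the algebraic identity $<B^n(\varphi),\varphi>_{2,s}=0$, which is precisely the truncated enstrophy conservation recorded earlier. The one mild point requiring care is to confirm that $B^n$ is a bona fide vector field on $\C^d$ — that is, that its components depend only on the $d(n)$ retained coordinates — which is built into the choice of the index set $\{\alpha_1,\dots,\alpha_{d(n)}\}$ in \eqref{B_n}.
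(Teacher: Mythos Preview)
Your proof is correct and follows essentially the same approach as the paper: both argue that $B^n$ is a finite-dimensional quadratic (hence locally Lipschitz) vector field, so classical ODE theory gives local existence and uniqueness, and a conserved quantity then prevents blow-up. The only difference is that the paper invokes conservation of the energy $\|\cdot\|_{1,s}$ for the a priori bound, whereas you use conservation of the enstrophy $\|\cdot\|_{2,s}$ (via the identity $<B^n(\varphi),\varphi>_{2,s}=0$ from the proof of Theorem~\ref{divergence_free}); on $\C^{d(n)}$ both are equivalent norms, so either choice works.
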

\begin{proof}
For each $k>0$, $B^n_k(\varphi^n)$ is a finite sum of quadratic terms, 
$$B^n_k(\varphi^n)=\sum_{h^2\leq n}\alpha_{h,k}\varphi^n_h \varphi^n_{k-h}.$$ Then existence of a unique solution follows from classical results on ordinary differential equations, while the conservation of the energy ensure that the solution is globally defined in time.
\end{proof}

Denote by $U^n(t,\varphi^n)$ the flow associated to $B^n$, that is $\varphi^n(0)\mapsto \varphi^n(t)$, and define the flow on $H^{1-\alpha,s}$ by
$$
U^n(t,\varphi)=U^n(t,\varphi^n) + \Pi_n^\perp \varphi,
$$
where $\Pi_n \varphi=\varphi^n$ stands for the orthogonal projection of $u$ on the subspace spanned by $\{e_{\alpha_1}, ..., e_{\alpha_{d(n)}}\}$, then we have
$$
\frac{dU^n(t,\varphi)}{dt}=B^n(U^n(t,\varphi)), \, U^n(0,\varphi)=\varphi(0),
$$
in particular, $U^n(\cdot,\varphi)=\sum_k U^n_k(\cdot,\varphi)e_k$ where $U^n_k(\cdot,\varphi)\in C(\R; \C)$ for all $k>0$.

\begin{thm}\label{exists}
There exists a flow $U(t,\w)$ defined on a probability space $(\Omega, \mathcal{F}, P_\gamma)$ with values in $H^{1-\alpha,s}$, $\alpha>2$, $U(\cdot,\w)\in C(\R; H^{1-\alpha,s})$, $\w\in \Omega$ such that
\begin{enumerate}
\item\label{uno}
\begin{equation}
U_k(t,\w)=U_k(0,\w)+\int_0^t B_k(U(s,\w)) ds, \quad P_\gamma-a.e.\,\w, \,\, \forall t\in\R,
\end{equation}
\item\label{due}
and such that the measure $\mu_\gamma$ is invariant for the flow, in the sense that:
\begin{equation}
\int f(U(t,\w))dP_\gamma(\w)=\int f d\mu_\gamma, \quad \forall t \in \R, \forall f\in \mathcal{D}.
\end{equation}
\end{enumerate}
\end{thm}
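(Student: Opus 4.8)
The plan is to obtain the flow $U(t,\omega)$ as a limit of the finite-dimensional flows $U^n(t,\varphi)$ from Lemma~\ref{finite_dimensional_result}, using the now-standard Prohorov--Skorohod compactness argument, exactly as in \cite{AC}. Each $U^n(\cdot,\varphi)$ is a continuous $H^{1-\alpha,s}$-valued process; push the measure $\mu_\gamma$ forward along $\varphi\mapsto U^n(\cdot,\varphi)$ to get probability measures $Q^n$ on the path space $C(\R;H^{1-\alpha,s})$ (or, restricting to a compact time interval and then exhausting $\R$, on $C([-T,T];H^{1-\alpha,s})$). The key point is to show that $\{Q^n\}$ is tight. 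For this I would first observe that, since $U^n$ preserves $\mu_\gamma^n$ (the finite-dimensional flow is divergence-free and enstrophy-preserving, by the computation in the proof of Theorem~\ref{divergence_free}) and since $\Pi_n^\perp$ is the identity on the complementary coordinates, the law of $U^n(t,\cdot)$ under $\mu_\gamma$ equals $\mu_\gamma$ for every fixed $t$ and every $n$; in particular $\sup_n\sup_t\Em\|U^n(t,\cdot)\|_{1-\alpha,s}^2<\infty$. To upgrade pointwise-in-$t$ tightness to tightness in path space I would establish an equicontinuity estimate: for $t_1<t_2$,
\[
\Em\|U^n(t_2,\cdot)-U^n(t_1,\cdot)\|_{1-\alpha',s}^2
=\Em\Bigl\|\int_{t_1}^{t_2}B^n(U^n(s,\cdot))\,ds\Bigr\|_{1-\alpha',s}^2
\le |t_2-t_1|^2\,\sup_n\sup_s\Em\|B^n(U^n(s,\cdot))\|_{1-\alpha',s}^2,
\]
and the right-hand supremum is finite and bounded uniformly in $n$ because $\mu_\gamma$ is $U^n$-invariant and $B^n\to B$ in $L^2_{\mu_\gamma}(H^{1-\alpha',s};H^{1-\alpha',s})$ by Proposition~\ref{B_regularity} and Corollary~\ref{convergence} (here one uses a slightly larger $\alpha'$, or a Sobolev embedding $H^{1-\alpha,s}\hookrightarrow H^{1-\alpha',s}$ that is compact, so that the combination of a uniform bound in the smaller space and Hölder continuity in the larger space yields relative compactness in $C([-T,T];H^{1-\alpha,s})$ via Ascoli). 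This gives tightness; Prohorov then yields a weakly convergent subsequence $Q^{n_j}\to Q$, and Skorohod's representation theorem provides a probability space $(\Omega,\mathcal F,P_\gamma)$ and processes $U^{n_j}(\cdot,\omega)$, $U(\cdot,\omega)$ with the right laws and with $U^{n_j}(\cdot,\omega)\to U(\cdot,\omega)$ in $C([-T,T];H^{1-\alpha,s})$ for $P_\gamma$-a.e.\ $\omega$.

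With the Skorohod coupling in hand, the two assertions follow by passing to the limit. For \eqref{due}, fix $t$ and $f\in\mathcal D$: since $f$ depends on finitely many coordinates and is continuous and bounded on the relevant finite-dimensional slice, $f(U^{n_j}(t,\omega))\to f(U(t,\omega))$ a.e., and for $n_j$ large enough that all coordinates on which $f$ depends lie among $\{\alpha_1,\dots,\alpha_{d(n_j)}\}$ we have $\int f(U^{n_j}(t,\omega))\,dP_\gamma=\int f\,d\mu_\gamma$ by the finite-dimensional invariance noted above; a uniform-integrability argument (again from the uniform $L^2$ bound, or simply boundedness of $f$) lets us pass the limit through the integral, giving $\int f(U(t,\omega))\,dP_\gamma=\int f\,d\mu_\gamma$. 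For \eqref{uno}, fix $k$ and $t$ and write the identity satisfied by the approximants,
\[
U^{n_j}_k(t,\omega)=U^{n_j}_k(0,\omega)+\int_0^t B^{n_j}_k(U^{n_j}(s,\omega))\,ds .
\]
The left side and the first term on the right converge a.e.\ by the Skorohod convergence (coordinate projections are continuous on $H^{1-\alpha,s}$). For the integral term, split $B^{n_j}_k(U^{n_j}(s,\omega))-B_k(U(s,\omega))$ into $B^{n_j}_k(U^{n_j})-B_k(U^{n_j})$ plus $B_k(U^{n_j})-B_k(U)$: the first piece tends to $0$ in $L^2(ds\times dP_\gamma)$ over $[0,t]$ by Corollary~\ref{convergence} together with the uniform invariance bound (change of variables: $U^{n_j}(s,\cdot)$ has law $\mu_\gamma$), and the second tends to $0$ because $B_k$, being a fixed finite-or-convergent quadratic expression in the coordinates, is continuous on $H^{1-\alpha,s}$ off a $\mu_\gamma$-null set and $U^{n_j}(s,\omega)\to U(s,\omega)$ there; dominated convergence (using the uniform $L^2$ control of $B_k(U^{n_j}(s,\cdot))$) then yields $\int_0^t B^{n_j}_k(U^{n_j}(s,\omega))\,ds\to\int_0^t B_k(U(s,\omega))\,ds$, at least along a further subsequence and $P_\gamma$-a.e. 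This proves \eqref{uno} for each $t$; since both sides are continuous in $t$, the null set can be taken independent of $t$, and a diagonal argument over an exhaustion $T\uparrow\infty$ extends everything to all of $\R$.

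The main obstacle is the tightness/equicontinuity step, and more precisely making the two scales of Sobolev spaces interact correctly: one needs the a~priori bounds (invariance of $\mu_\gamma$, $L^2_{\mu_\gamma}$ and $L^p_{\mu_\gamma}$ control of $B$ and its derivatives from Propositions~\ref{B_regularity} and~\ref{reg_B_derivative}) to hold in a space $H^{1-\alpha',s}$ slightly rougher than, yet compactly embedded from, $H^{1-\alpha,s}$, so that uniform boundedness plus Hölder-in-time equicontinuity deliver compactness in $C(\R;H^{1-\alpha,s})$. A secondary subtlety is that $U^n(t,\cdot)$ on the full space $H^{1-\alpha,s}$ is $\mu_\gamma$-measure-preserving because the complementary infinite block of coordinates is left untouched and $\mu_\gamma$ is a product measure — this is what makes the uniform-in-$n$, uniform-in-$t$ moment bounds available, and it should be stated explicitly. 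Everything else is the routine Prohorov--Skorohod limiting procedure; I would follow \cite[Section 3]{AC} closely and only indicate the places where the pseudodifferential factor $(1+a^2k^2)^s$ changes the exponent bookkeeping, all of which has already been absorbed into the convergence statements proved earlier in this section.
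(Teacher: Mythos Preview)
Your proposal is correct and follows the same Prohorov--Skorohod compactness strategy as the paper, with one technical difference worth noting. The paper establishes tightness \emph{coordinate-wise}: for each fixed $k$ it views $U^n_k(\cdot,\varphi)$ as a $\C$-valued process, checks Prohorov's criterion on $C(\R^+;\C)$ directly (a Chebyshev bound on $|y(0)|$ together with a modulus-of-continuity estimate using $\Em|B^n_k|^2\le C$ and the invariance of $\mu_\gamma^n$), extracts a convergent subsequence for each $k$ by a diagonal argument over $k\in\Z^2$, and then assembles the limit. Because $\C$ is finite-dimensional, this sidesteps any compact-embedding issue entirely. You instead work on the full path space $C(\R;H^{1-\alpha,s})$ and therefore need the two-scale Sobolev trick; this is also fine and arguably cleaner conceptually, but the index bookkeeping should be reversed from what you wrote: for Ascoli to give relative compactness in $C([-T,T];H^{1-\alpha,s})$ you want the pointwise-in-$t$ bounds in a \emph{stronger} space $H^{1-\beta,s}$ with $-\tfrac{s}{s+1}<\beta<\alpha$ (available since $\mu_\gamma$ is supported there and $U^n$ preserves $\mu_\gamma$) and equicontinuity in $H^{1-\alpha,s}$, the compact embedding being $H^{1-\beta,s}\hookrightarrow H^{1-\alpha,s}$. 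The limiting arguments for \ref{uno} and \ref{due} are handled the same way in both proofs.
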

\begin{proof}
The construction of such a flow can be found in \cite{AC} in the case of the two-dimensional Euler system. The same arguments apply in the case of the two-dimensional averaged-Euler equations. For $t\in\R^+$ consider $U^n_k(t, \varphi)$ as a stochastic process with law on $C(\R^+; \C)$ defined by
$$
\eta^n_k(\Gamma)=\mu_\gamma\{\varphi : U^n_k(\cdot, \varphi)\in \Gamma\}, \qquad \Gamma \subset C(\R^+; \C).
$$
Consider the sup-norm on $C(\R^+; \C)$ and the weak topology on the space of measures over $C(\R^+; \C)$. We have that:
\begin{enumerate}[label=\arabic*.]
\item \label{one}
$$\eta^n_k(|y(0)|>R)\leq \frac{1}{R^2}\Em(|\w_k|^2)=\frac{2}{\gamma R^2k^4(1+a^2k^2)^{2s}}\to 0 \quad \mbox{ when } R\to +\infty$$ 
\item \label{two}
for all $\rho>0$ and $T>0$ 
\begin{align*}
\eta^n_k\left(\sup_{\substack{0\leq t\leq t'\leq T \\ |t'-t|\leq \delta}} |y(t)-y(t')|>\rho\right)&\leq \frac{1}{\rho^2}
\Em\left( \sup_{t,t'}|U^n_k(t,\varphi)- U^n_k(t',\varphi)|^2\right)\\
&\leq \frac{\delta}{\rho^2}\Em \int_0^T |B^n_k(U^n(s,\varphi))|^2ds\\
&\leq \frac{\delta T}{\rho^2}\Em  |B^n_k|^2 \leq \frac{\delta T C}{\rho^2}\to 0 \mbox{ when } \delta\to 0,
\end{align*}
\end{enumerate}
in the last inequalities we used respectively that $U^n(t,\varphi)$ is a flow for $B^n$ and that $B^n$ has null divergence with respect to $\mu_\gamma$ for all $n$. By \ref{one} and \ref{two} we are under the assumptions of Prohorov's criterium; then there exists a subsequence of $\eta_k^n$ (again denoted by $\eta^n_k$) that converges weakly to $\eta_k$. Remark that we can choose an arbitrary subsequence since $k$ belongs to $\Z^2$ that is countable. Hence, by Skorohod's theorem, there exists a probability space $(\Omega, \mathcal{F}, P_\gamma)$ and family of processes $U'^{n}_k(t,\w)$, $U_k(t,\w)$, $\w\in \Omega$, having laws respectively $\eta^n_k$ and $\eta_k$ on $C(\R^+; \C)$. Furthermore, $U'^{n}_k(\cdot,\w)\to U_k(\cdot,\w)$, $P_\gamma$-a.e. $\w$. Repeat for $t\in\R^+\mapsto U^n_k(-t,\varphi)$ to get the negative values of $t$. We now prove \ref{due}, take $f\in \mathcal{D}$,
\begin{align*}
\int f(U^n(t,\varphi))d\mu_\gamma &= \int d\mu_\gamma^{n,\perp}  \int f(U^n(t,\varphi^n))d\mu_\gamma^{n}\\
&= \int d\mu_\gamma^{n,\perp}  \int f d\mu_\gamma^{n}=\int f d\mu_\gamma, \qquad \forall t>0
\end{align*}
where $d\mu_\gamma^{n}=\prod_{k\in\{\alpha_1, ..., \alpha_{d(n)}\}} d\mu_{\gamma,k}$ and $d\mu_\gamma^{n,\perp}=\prod_{k\notin\{\alpha_1, ..., \alpha_{d(n)}\}} d\mu_{\gamma,k}$. On the other hand denoting by $\eta^n$ the law of $U^n(\cdot, \varphi)$, we also have
\begin{align*}
\int f d\mu_\gamma& =\int f(U^n(t,\varphi)) d\mu_\gamma= \int f (y(t))d\eta^n \\
&= \int f(U'^{n}(t,\w)) dP_\gamma\to \int f(U(t,\w)) dP_\gamma.
\end{align*}
Remark that $U(t,\w)$ takes values in $H^{1-\alpha,s}$; in fact $P_\gamma$-a.e. $\w\in \Omega$ we have
$$
\int \|U(t,\w)\|_{1-\alpha,s}^2dP_\gamma= \int \|\varphi\|_{1-\alpha,s}^2d\mu_\gamma <+\infty.
$$
Finally we prove \ref{uno}; we have
\begin{align*}
\int \Bigl| & \int_0^t [B^n_k(U'^{n}(s,\w))  - B_k(U(s,\w))]ds \Bigr| dP_\gamma \\
&\leq  \int  \int_0^t |B^n_k(U'^{n}(s,\w))  - B_k(U'^{n}(s,\w))|ds dP_\gamma \\
&  + \int  \int_0^t |B_k(U'^{n}(s,\w))  - B_k(U(s,\w))|ds dP_\gamma.
\end{align*}
The first integral converges through zero by the identification in law of $U'^n(t,\w)$ and $U^n(t,\varphi)$, by the invariance of $\mu_\gamma$ under the flow $U^n(t,\varphi)$ and the fact that $B^n_k\to B_k$ in $L^2_{\mu_\gamma}$.
The second integral converges towards zero by the dominated convergence theorem. Indeed $\{B_k(U'^{n}(s,\w))\}_{n\in \N^*}$ is uniformly integrable on $[0,t]\times \Omega$, 
$$
\int  \int_0^t |B_k(U'^{n}(s,\w))| ds dP_\gamma= \int_0^t  \int  |B_k^n(\varphi)|  d\mu_\gamma ds \leq   \int_0^t \int  |B_k(U(s,\w))| dP_\gamma ds  \leq Ct,
$$
and $B_k(U'^{n}_k(s,\w))\to B_k(U_k(s,\w))$ $P_\gamma$-a.e. $\w$ for all $s\in[0,t]$ when $n$ goes to infinity. The latter statement follows from the fact that
$U'^{n}_k(\cdot,\w)\to U_k(\cdot,\w)$, $P_\gamma$-a.e. $\w$ and that $B_k^n$ converges uniformly to $B_k$ (see Corollary \ref{convergence***}) where
$$
B_k(U'^{n}(\cdot,\w))=\sum_{h^2\leq n} \alpha_{h,k} U'^{n}_h(\cdot,\w)U'^{n}_{k-h}(\cdot,\w).
$$
\end{proof}


\begin{rem}
At this point we could ask ourselves about the possibility of considering the (Gibbs) measure associated to the energy, formally $$\nu_\gamma\simeq \frac{1}{Z} e^{-\frac{\gamma E}{2}}\times ``Lebesgue~ measure",$$ where $Z$ denotes a suitable normalizing constant, instead of  the measure associated to the enstrophy $\mu_\gamma$ in \eqref{enstrophy_measure}.
We can observe that the vector field $B$ is not in $L^2$ with respect to $\nu_\gamma$.
\end{rem}

\section{A surface measure}

The energy of the averaged-Euler system belongs to the space $L^2_{\mu_\gamma}$. Therefore, as previously done in \cite{C} for the Euler system 
(here a ``renormalized'' energy must be taken into account, because the energy is not square integrable with respect to the invariant measure), we consider the ``surface'' measure defined on the level sets of $\E$, namely the conditional measure $\mu_\gamma(dx | \E=r)$ for $r>0$. We want to take advantage of the fact that the energy $\E$ is also a conserved quantity of the motion in order to construct a flow for the averaged-Euler vector field with values on the level sets of $\E$. 

\begin{rem}
It is not possible to construct a flow on the level sets of $\E$ using the invariant measure; in fact $\mu_\gamma\{\varphi \, | \E(\varphi)=r\}=0$.
\end{rem}

We consider suitable Sobolev spaces on $(H^{1-\alpha,s},H^{2,s}, \mu_\gamma)$: the space $W_1^p$ of the maps $f:H^{1-\alpha,s}\to \R$ that belong to $L^p_{\mu_\gamma}(H^{1-\alpha,s}; \R)$ such that $\nabla f: H^{1-\alpha,s}\to H^{2,s}$, defined as  $D_h f(x)=<\nabla f(x), h>_{2,s}$ for all $h\in H^{2,s}$ satisfy  $\nabla f\in L^p_{\mu_\gamma}(H^{1-\alpha,s}; H^{2,s})$. More generally the space $W_r^p$, for every integer $r>1$, is the space of functions $f\in W_{r-1}^p$ such that $D_h f(x)\in W_{r-1}^p$ for all $h\in H^{2,s}$.

\begin{prop}\label{reg_ren_energy}
The energy 
$\E$ belongs to Sobolev spaces of all orders, that is \\$\E\in  \W8:=\bigcap_{p,r}W_r^p$.
\end{prop}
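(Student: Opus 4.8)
The plan is to write $\E = \frac12\|\varphi\|_{1,s}^2$ in Fourier coordinates as $\E(\varphi)=\frac12\sum_{k>0} k^2(1+a^2k^2)^s|\w_k|^2$, a convergent quadratic form on $H^{1-\alpha,s}$ (convergence $\mu_\gamma$-a.e. and in every $L^p_{\mu_\gamma}$ is immediate from $\Em(|\w_k|^{2p})=\frac{2^p p!}{\gamma^p k^{4p}(1+a^2k^2)^{2sp}}$ and $\alpha>2$), and then compute its Malliavin gradients directly and estimate their Hilbert--Schmidt norms, exactly in the style of Proposition \ref{reg_B_derivative}. First I would fix the orthonormal basis $\hat e_k = e_k/(k^2(1+a^2k^2)^s)$ of $H^{2,s}$ and compute $D_{\hat e_j}\E(\varphi)$; since $\E$ is quadratic and diagonal, $D_{e_j}\E(\varphi)=\frac12 j^2(1+a^2j^2)^s(\w_j+\bar\w_j)$ (up to the usual real/complex bookkeeping for the pairing $\Re$), so $\nabla\E(\varphi) = \sum_{j>0} c_j \w_j \hat e_j$ with bounded coefficients $c_j$, and $\nabla^2\E$ is the constant symmetric operator with diagonal entries $c_j$, while $\nabla^r\E \equiv 0$ for $r\geq 3$.

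Next I would bound the relevant norms. For the first gradient,
\begin{align*}
\Em\|\nabla\E(\varphi)\|_{2,s}^{2p} &= \Em\Bigl(\sum_{j>0} \tfrac{|D_{e_j}\E(\varphi)|^2}{j^4(1+a^2j^2)^{2s}}\Bigr)^{p} \\
&\leq \Bigl[\sum_{j>0} \tfrac{1}{j^4(1+a^2j^2)^{2s}} \bigl(\Em|D_{e_j}\E(\varphi)|^{2p}\bigr)^{1/p}\Bigr]^{p},
\end{align*}
and since $|D_{e_j}\E(\varphi)|^{2p} \lesssim j^{4p}(1+a^2j^2)^{2sp}|\w_j|^{2p}$, the $j$-th summand is $O(1/j^2)$ uniformly in $j$ after using the moment formula — wait, I must be careful: that gives $\sum_j O(1)$, which diverges. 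The correct bookkeeping is that $\nabla\E$ takes values in $H^{2,s}$, so the coefficient of $\hat e_j$ in $\nabla\E$ is $\langle \nabla\E,\hat e_j\rangle_{2,s} = D_{\hat e_j}\E = D_{e_j}\E /(j^2(1+a^2j^2)^s)$, hence $\|\nabla\E\|_{2,s}^2 = \sum_j |D_{\hat e_j}\E|^2 = \sum_j |c_j|^2|\w_j|^2$ with $c_j=O(1)$; taking $p$-th moments via Minkowski as above yields $\sum_j |c_j|^2 (\Em|\w_j|^{2p})^{1/p} = \sum_j O(1)\cdot O(1/(j^4(1+a^2j^2)^{2s})) < +\infty$. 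For $\nabla^2\E$, which is the fixed operator $\sum_j |c_j|^2 \hat e_j\otimes\hat e_j$ restricted appropriately, the Hilbert--Schmidt norm is the deterministic constant $\bigl(\sum_j |c_j|^2/(j^4(1+a^2j^2)^{2s})\bigr)^{1/2}<+\infty$ — actually since $\nabla^2\E(\hat e_i,\hat e_j)$ is a scalar, $\|\nabla^2\E\|_{H.S.}^2 = \sum_{i,j}|\nabla^2\E(\hat e_i,\hat e_j)|^2 = \sum_j |c_j|^2/(j^4(1+a^2j^2)^{2s})^? $, which converges; being constant it lies in $L^p_{\mu_\gamma}$ for every $p$ trivially. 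All higher gradients vanish.

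Putting this together: $\E\in L^p_{\mu_\gamma}$ for all $p$, $\nabla\E\in L^p_{\mu_\gamma}(H^{1-\alpha,s};H^{2,s})$ for all $p$, $\nabla^2\E\in L^p_{\mu_\gamma}$ for all $p$, and $\nabla^r\E=0$ for $r\geq 3$; by the very definition of $W_r^p$ this shows $\E\in W_r^p$ for every $p\geq 1$ and every integer $r\geq 1$, i.e. $\E\in\W8$. I expect the main (and only real) obstacle to be organizational rather than analytic: getting the complex/real pairing conventions straight so that $D_{e_j}\E$ is computed correctly (the factor from differentiating $|\w_j|^2=\w_j\bar\w_j$ in the Malliavin direction $e_j$), and making sure the summability exponents line up — but these are exactly the estimates already carried out for $B$, $\nabla B$, $\nabla^2 B$ in Propositions \ref{B_regularity} and \ref{reg_B_derivative}, and here they are easier because $\E$ is a diagonal quadratic form, so $\nabla^2\E$ is constant and the series are manifestly convergent for all $\alpha>2$.
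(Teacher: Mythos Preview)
Your approach is essentially identical to the paper's: write $\E$ as the diagonal quadratic form $\tfrac12\sum_k k^2(1+a^2k^2)^s|\w_k|^2$, compute the Malliavin derivatives explicitly in the orthonormal basis $\hat e_k$, and bound the $L^p_{\mu_\gamma}$-norms of $\E$, $\nabla\E$, $\nabla^2\E$ via Minkowski and the moment formula, noting that higher gradients vanish. The paper records $D_{\hat e_k}\E(\varphi)=2|\w_k|$ and $\|\nabla^2\E\|^2_{H.S.}=\sum_k k^{-4}(1+a^2k^2)^{-2s}$, which is exactly what your $c_j=O(1)$ bookkeeping amounts to; the only cosmetic difference is that the paper presents the three estimates cleanly without the mid-proof self-corrections, and it does not invoke $\alpha>2$ here (the relevant series $\sum_k k^{-4}(1+a^2k^2)^{-2s}$ converges regardless of $\alpha$).
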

\begin{proof}
First, we want to show that
$$
(\Em |\E(\varphi)|^{2m})^{1/m}<+\infty \quad \forall m=2^{p-1} \mbox{ and } p\geq 2.
$$
We have
$$
(\Em |\E(\varphi)|^{2m})^{1/m}\leq \sum_k \left[\Em \left(k^2(1+a^2k^2)^s|\w_k|^2 \right)^{2m}\right]^{1/m} 
$$
and
$$
\Em \left(k^2(1+a^2k^2)^s|\w_k|^2 \right)^{2m}\leq c(p,\gamma)\frac{1}{k^{4m}(1+a^2k^2)^{2ms}}; 
$$
thus
$$
(\Em |\E(\varphi)|^{2m})^{1/m}\leq c(p,\gamma)\sum_k \frac{1}{k^{4}(1+a^2k^2)^{2s}}.
$$
Now consider the linear functional $\nabla \E (\varphi): H^{2,s} \to \R$,
$$
\nabla \E (\varphi)(e_k)=\lim_{\varepsilon\to 0}\frac{1}{\varepsilon}\left(\E(\varphi +\varepsilon e_k)-\E(\varphi)\right)=2k^2(1+a^2k^2)^s|\w_k|,
$$
and take $\hat e_k = \frac{e_k}{k^2(1+a^2k^2)^s}$ for all $k>0$, orthonormal basis of $H^{2,s}$; then
$\nabla \E (\varphi)(\hat e_k)=2|\w_k|$ and 
$$
(\Em\|\nabla \E(\varphi)\|_{2,s}^{2m})^{1/m}\leq 4\sum_k (\Em|\w_k|^{2m})^{1/m}\leq c(p,\gamma)\sum_k \frac{1}{k^4(1+a^2k^2)^{2s}}<+\infty.
$$
Finally observe that 
$$
(\Em\|\nabla^2 \E(\varphi)\|_{H^{2,s}\otimes H^{2,s}}^{2m})^{1/m}=\sum_k \frac{1}{k^4(1+a^2k^2)^{2s}}<+\infty.
$$
\end{proof}

Next proposition is proved in \cite{C}, following \cite{M}, in the case of the Euler system.

\begin{prop}\label{max_rank_energy}
$\E$ is of maximal rank, that is $\|\nabla \E\|^{-1}_{2,s}\in\W8$.
\end{prop}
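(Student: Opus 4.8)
The plan is to reduce the statement to a negative-moment estimate. By the computation carried out in the proof of Proposition~\ref{reg_ren_energy}, $\|\nabla\E(\varphi)\|_{2,s}^{2}$ equals a positive constant times $G(\varphi):=\sum_{k>0}|\w_k|^{2}$ (essentially the squared $L^{2}(\T^{2})$-norm of $\varphi$); hence $\|\nabla\E\|_{2,s}^{-1}$ is a constant multiple of $G^{-1/2}$, and it suffices to prove $G^{-1/2}\in\W8$. A first, essentially bookkeeping, step is to record that $G\in\W8$: the moments $\Em(G^{m})$ are finite for every $m$ by Minkowski's inequality together with $\Em|\w_k|^{2m}=2^{m}m!\,\gamma^{-m}k^{-4m}(1+a^{2}k^{2})^{-2ms}$, exactly as in Proposition~\ref{reg_ren_energy}; the first Malliavin gradient lies in every $L^{p}_{\mu_\gamma}(\cdot;H^{2,s})$ because $\|\nabla G(\varphi)\|_{2,s}^{2}$ is bounded by a constant times $\sum_{k>0}|\w_k|^{2}k^{-4}(1+a^{2}k^{2})^{-2s}$, again by the preceding estimate; the second gradient is a fixed Hilbert--Schmidt operator on $H^{2,s}$, and, $G$ being a quadratic form, the higher gradients vanish.

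The core of the argument is the second step: $\Em(G^{-p})<+\infty$ for every $p\ge 1$. The point is that a single coordinate $|\w_k|^{2}$ possesses only negative moments of order strictly less than $1$, whereas the full series $G=\sum_{k>0}|\w_k|^{2}$ has all of them, because arbitrarily many independent modes contribute near the degeneracy locus $\varphi=0$. Concretely, fix a finite set $F\subset\{k\in\Z^{2}:k>0\}$ and write $\w_k=\sigma_k\zeta_k$ with $\sigma_k>0$ and $\zeta_k$ a standard complex Gaussian, i.e.\ with $\mathrm{Re}\,\zeta_k$ and $\mathrm{Im}\,\zeta_k$ independent and $N(0,1)$; then
$$
G\ \ge\ \sum_{k\in F}|\w_k|^{2}\ \ge\ \Bigl(\min_{k\in F}\sigma_k^{2}\Bigr)\sum_{k\in F}|\zeta_k|^{2},
$$
and $\sum_{k\in F}|\zeta_k|^{2}$ is a chi-squared variable with $2|F|$ degrees of freedom, for which $\Em\bigl[(\sum_{k\in F}|\zeta_k|^{2})^{-\beta}\bigr]<+\infty$ as soon as $\beta<|F|$. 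Choosing $|F|>p$ therefore gives $\Em(G^{-p})<+\infty$; since $p$ is arbitrary, $G^{-1}\in\bigcap_{q\ge 1}L^{q}_{\mu_\gamma}$.

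The third step is then routine. Differentiating $G^{-1/2}$ by the iterated chain rule (Fa\`{a} di Bruno) for Malliavin derivatives, $\nabla^{r}(G^{-1/2})$ is a finite linear combination of (suitably symmetrized) terms $c\,G^{-1/2-j}\,\nabla^{r_{1}}G\otimes\cdots\otimes\nabla^{r_{j}}G$ with $1\le j\le r$ and $r_{1}+\cdots+r_{j}=r$; estimating the Hilbert--Schmidt norm of such a term by H\"older's inequality, with $G^{-1/2-j}\in\bigcap_{q}L^{q}_{\mu_\gamma}$ from the second step and $\nabla^{r_{i}}G\in\bigcap_{q}L^{q}_{\mu_\gamma}$ from the first, one obtains $\nabla^{r}(G^{-1/2})\in L^{p}_{\mu_\gamma}$ for all $p$ and $r$, that is $G^{-1/2}\in\W8$. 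The only point requiring care, handled as in \cite{C} following \cite{M}, is that $t\mapsto t^{-1/2}$ is not smooth at $0$ while $G$ is merely a.e.\ positive: one first works with $G+\varepsilon$, to which the chain rule applies without restriction and for which all the preceding bounds are uniform in $\varepsilon$ (they involve only $\Em((G+\varepsilon)^{-q})\le\Em(G^{-q})$ and the moments of $G$), and then lets $\varepsilon\downarrow 0$, invoking the closability of the Malliavin gradient.

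I expect the genuine obstacle to be the second step --- equivalently, controlling $\|\nabla\E\|_{2,s}$ near $\varphi=0$, where it degenerates. The resolution is precisely that, under the Gaussian measure $\mu_\gamma$, the law of $\|\nabla\E\|_{2,s}^{2}\simeq\sum_{k>0}|\w_k|^{2}$ is far flatter at the origin than that of any of its finite-dimensional sections, so that any prescribed negative power can be rendered integrable by retaining sufficiently many Fourier modes.
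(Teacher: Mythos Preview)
Your proof is correct, and in one respect more complete than the paper's. Both arguments reduce the proposition to showing that $G:=\|\nabla\E\|_{2,s}^{2}$, which by the computation in Proposition~\ref{reg_ren_energy} is a positive constant times $\sum_{k>0}|\w_k|^{2}$, has all negative moments under $\mu_\gamma$. The paper stops there, leaving the passage from $G^{-1}\in\bigcap_p L^{p}_{\mu_\gamma}$ to $\|\nabla\E\|^{-1}\in\W8$ implicit; you spell out the Fa\`a~di~Bruno/closability argument, which is the standard way to fill that gap.

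The genuine difference is in how the negative-moment estimate is obtained. The paper goes through a small-ball probability: by the exponential Chebyshev inequality and independence of the Fourier modes,
\[
\mu_\gamma\{G\le\varepsilon\}\ \le\ e^{t/\varepsilon}\,\Em\bigl(e^{-tG}\bigr)\ =\ e^{t/\varepsilon}\prod_{k>0}\Bigl(1+\frac{8t}{\gamma k^{4}(1+a^{2}k^{2})^{2s}}\Bigr)^{-1},
\]
and since the number of factors comparable to $(1+t^{2})^{-1}$ grows without bound as $t\to\infty$, optimizing in $t$ yields $\mu_\gamma\{G\le\varepsilon\}=O(\varepsilon^{N})$ for every $N$, hence $\Em(G^{-p})<\infty$ for all $p$. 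Your route is more direct: bounding $G$ from below by $\sum_{k\in F}|\w_k|^{2}$, which after rescaling is a $\chi^{2}_{2|F|}$ variable, you read off the $p$-th negative moment as soon as $|F|>p$. The Laplace-transform argument is the one found in the Malliavin-calculus literature (Airault--Malliavin, followed in \cite{C,M}) and delivers the sharper quantitative fact that the small-ball probability decays faster than any polynomial; your chi-squared lower bound is shorter and avoids the optimization step, at the price of that extra information, which the proposition does not require.
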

\begin{proof}
We want to show that $\Em \|\nabla \E(\varphi)\|_{2,s}^{-2p}<+\infty$ for all $p$. By Chebycheff inequality, for all $t>0$
$$
-e^{-\frac{t}{\varepsilon}}\Em \left( e^{-t\|\nabla \E(\varphi)\|_{2,s}^{-2}}\right)\leq\mu_\gamma \left\{ \|\nabla \E(\varphi)\|_{2,s}^{2}\leq \varepsilon \right\}\leq 
e^{\frac{t}{\varepsilon}}\Em \left( e^{-t\|\nabla \E(\varphi)\|_{2,s}^{2}}\right).
$$
In particular
$$
\mu_\gamma \left\{ \|\nabla \E(\varphi)\|_{2,s}^{2}\leq \varepsilon \right\} \geq -e^{-\frac{1}{\varepsilon}}\sum_{p\geq 0}\frac{(-1)^p}{p!}\Em(\|\nabla \E(\varphi)\|_{2,s}^{-2p}),
$$
meaning that $\Em \|\nabla \E(\varphi)\|_{2,s}^{-2p}$ are finite for all $p$ whenever $\mu_\gamma \left\{ \|\nabla \E(\varphi)\|_{2,s}^{2}\leq \varepsilon \right\}$ is finite. We have
\begin{align*}
\mu_\gamma \left\{ \|\nabla \E(\varphi)\|_{2,s}^{2} \leq \varepsilon \right\}&\leq 
e^{\frac{t}{\varepsilon}}\Em \left( e^{-t\|\nabla \E(\varphi)\|_{2,s}^{2}}\right)\\
& = e^{\frac{t}{\varepsilon}}\prod_k \left( \frac{1}{1+ \frac{8t}{\gamma k^4(1+a^2k^2)^{2s}}} \right)\\
&\leq e^{\frac{t}{\varepsilon}}\prod_{\{k \,:\, \gamma k^4(1+a^2k^2)^{2s}<\frac{8}{t}\}} \left( \frac{1}{1+t^2} \right)\\
&\leq \inf_t e^{\frac{t}{\varepsilon}}\left( \frac{1}{1+t^2} \right)^c <+\infty
\end{align*}
where $c=\#\{k\,:\, \gamma k^4(1+a^2k^2)^{2s}<\frac{8}{t}\}$.
\end{proof}

For $g\in\W8$, we shall denote by $\rho(r)=\frac{d (\E * \mu_\gamma)}{dr}$ and by $\rho_g(r)=\frac{d (\E * g\mu_\gamma)}{dr}$ respectively the $C^{\infty}$ densities of $d (\E * \mu_\gamma)$ and $d (\E * g\mu_\gamma)$ with respect to the Lebesgue measure, see \cite{Ma,AM}. As proved in \cite{AM}, Propositions \ref{reg_ren_energy} and \ref{max_rank_energy} ensure the existence of a conditional measure of $\mu_\gamma$ knowing that $\E=r$ for $r>0$. 

\begin{thm}\label{surf_measure}
Let $r>0$ be such that $\rho(r)>0$; then there exists a Borel probability measure defined on $H^{1-\alpha,s}$, $\nu^r_\gamma$, with support on $V_r=\{\varphi\, | \E(\varphi)=r\}$ and such that
$$
\int g^*(\varphi)d\nu^r_\gamma=\frac{\rho_g(r)}{\rho(r)},
$$
for any $g^*$redefinition of $g$.
\end{thm}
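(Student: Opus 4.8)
The plan is to realize $\nu^r_\gamma$ as the Borel measure representing the positive linear functional $g^* \mapsto \rho_g(r)/\rho(r)$, leaning on the co-area machinery of Airault--Malliavin \cite{AM}. First I would recall from \cite{AM} that, precisely because $\E \in \W8$ (Proposition \ref{reg_ren_energy}) and $\E$ is of maximal rank (Proposition \ref{max_rank_energy}), the image measures $\E * \mu_\gamma$ and $\E * (g\mu_\gamma)$ for $g \in \W8$ are absolutely continuous with respect to Lebesgue measure on $\R$, with $C^\infty$ densities $\rho$ and $\rho_g$; moreover $\rho_g(r)$ depends only on the $\mu_\gamma$-equivalence class of $g$, so it is well defined on any redefinition $g^*$. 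The identity to extract is the generalized-expectation formula $\rho_g(r) = \Em\big(g\, \delta(\E - r)\big)$, equivalently $\rho_g(r) = \frac{d}{dr}\int_{\{\E \le r\}} g\, d\mu_\gamma$, from which everything else follows.

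Next, fix $r$ with $\rho(r) > 0$ and set $L_r(g) := \rho_g(r)/\rho(r)$ on $\W8$. I would verify: \emph{positivity}, i.e.\ $g \ge 0 \Rightarrow \rho_g(r) \ge 0$, since $r' \mapsto \int_{\{\E \le r'\}} g\, d\mu_\gamma$ is nondecreasing and smooth, so its derivative at $r$ is nonnegative; \emph{normalization}, $L_r(1) = \rho(r)/\rho(r) = 1$; and a \emph{boundedness} estimate of the form $|\rho_f(r)| \le \big(\text{local sup of }|f|\big)\rho(r) + o(1)$, again read off the displayed integral representation, which lets $L_r$ extend from a dense subalgebra of $C_b(H^{1-\alpha,s}) \cap \W8$ to all of $C_b(H^{1-\alpha,s})$. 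Combining positivity, normalization and the tightness of $\mu_\gamma$ on $H^{1-\alpha,s}$, the Riesz--Markov (Daniell--Stone) theorem produces a Borel probability measure $\nu^r_\gamma$ on $H^{1-\alpha,s}$ satisfying $\int g^*\, d\nu^r_\gamma = \rho_g(r)/\rho(r)$.

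To identify the support, I would fix $\delta > 0$ and pick $g \in \W8$ with $0 \le g \le 1$ vanishing on $\{|\E - r| < \delta\}$; then $r' \mapsto \int_{\{\E \le r'\}} g\, d\mu_\gamma$ is locally constant near $r$, so $\rho_g(r) = 0$, i.e.\ $\nu^r_\gamma(\{|\E - r| \ge \delta\}) = 0$. Letting $\delta \downarrow 0$ gives $\nu^r_\gamma(H^{1-\alpha,s}\setminus V_r) = 0$, so $\nu^r_\gamma$ is carried by $V_r = \{\varphi : \E(\varphi) = r\}$.

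The main obstacle is the first step: correctly invoking (and, where needed, re-deriving in the complex abstract Wiener space $(H^{1-\alpha,s},H^{2,s},\mu_\gamma)$ at hand) the Airault--Malliavin representation of $\rho_g$, its $C^\infty$ regularity in $r$, and its dependence only on the class of $g$. Once that representation is available, positivity, normalization, the Riesz representation, and the support assertion are comparatively routine. A secondary point requiring care is that $\E$ is of maximal rank but not uniformly so, which is why $r$ is restricted to $\{\rho > 0\}$ and why the control of $\rho_g$ must be localized near such $r$.
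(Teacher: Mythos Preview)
Your proposal is correct and follows the same route as the paper: the paper's entire proof is the one-line citation ``See \cite{AM}'', so both you and the author rely on the Airault--Malliavin construction of conditional measures on level sets of a nondegenerate $\W8$ functional. You have simply unpacked what that citation contains (the smooth-density representation $\rho_g(r)=\frac{d}{dr}\int_{\{\E\le r\}} g\,d\mu_\gamma$, positivity, normalization, Riesz representation, and the support argument), whereas the paper defers all of this to the reference; the remark immediately following the theorem in the paper sketches the same $\delta_*\Phi$ machinery you are implicitly invoking.
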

\begin{proof}
See \cite{AM}.
\end{proof}

\begin{rem}
Recall that, given a measurable function $\Phi$ with values in $\R^n$, we call a $(p,r)$-redefinition of $\Phi$ a function $\Phi^*$
 such that $\Phi=\Phi^*$ a.s. and $\Phi^*$ is $(p,r)$-continuous (that is, if $\forall \varepsilon >0$ it is possible to find an open set $O_\varepsilon$ such that $c_{p,r}(O_\varepsilon)<\varepsilon$ and the restriction of $\Phi^*$ to $O_\varepsilon^c$ is continuous). The capacity of the open set $O$ is given by $c_{p,r}(O)=\inf \{ \|u\|_{W_{2r}^p}; u\geq 0, u(x)\geq 1 , \mu-\mbox{a.e. on } O \}$; $O$ is said to be slim if $c_{p,r}(O)=0$, for all $p,r\in \mathbb{N}$.

For all $\Phi\in \W8$ there exists a redefinition $\Phi^*$ and a sequence of open sets $\{O_n\}_{n\in \N}$ associated to this redefinition such that: $\bigcap_n O_n$ is slim, $\Phi^*$ is continuous on $\left(\bigcap_n O_n\right)^c$ and $\Phi^*$ and $\nabla^r \Phi^*$ are continuous on $O_n^c$ for all $n,r \in \N$. 

The proof of Theorem \ref{surf_measure} is essentially based on the following considerations. For a fixed $\Phi\in \W8$ of maximal rank and non-degenerate and for $g\in\W8$ we consider the map 
$$
<\delta\Phi, g>: \xi \mapsto <\delta_\xi\Phi, g>:=\rho_g(\xi)/\rho(\xi);
$$
this map  belongs to $C^{\infty}(O;\R)$ where $O=\{\xi \in \mbox{supp}(\Phi*\mu)\subset\R^n : \rho(\xi)>0 \}$. 
In particular the map 
$$
g\mapsto <\delta\Phi, g>
$$
is a continuous linear functional from $\W8$ to the space of functions $C^{\infty}$ on $O$. If $S(O)$ is the Schwartz space of $O$ and $W'$ the dual of $\W8$ ($W'$ was accurately defined by Watanabe, see \cite{Ma}) we can consider the dual map 
$$
\delta_*\Phi: S(O) \to W'
$$ 
that associates linear functionals on $\W8$ to distributions over $\R^n$ and such that
\begin{equation}\label{duality}
< <\delta_*\Phi, v>,g>=<v,<\delta\Phi, g> >
\end{equation}
for every $v\in S(O)$ and $g\in\W8$. For further details see \cite{Ma, AM}.
\end{rem}

We compute the second order moments of $\nu_\gamma^r$. From \cite{Ma} we know that $\rho_{\w_k \bar\w_{k'}}\in S(\R)$ since $\w_k\bar \w_{k'}\in \W8$; then we have 
$$
\hat \rho_{\w_k\bar \w_{k'}}(r) = \int_{-\infty}^{+\infty} e^{i r \xi}\rho_{\w_k\bar \w_{k'}}(\xi) d\xi=  \int_{H^{1-\alpha,s}} e^{i r \E(\varphi)} \w_k\bar \w_{k'} d\mu_\gamma.
$$
If $k\neq k'$, 
\begin{align*}
\hat \rho_{\w_k\bar \w_{k'}}(r) = \prod_{j\neq k, k'}&\int_\C e^{\frac{ir}{2}j^2(1+a^2j^2)^s|\w_j|^2}d\mu_{\gamma,j}\int_\C e^{\frac{ir}{2}k^2(1+a^2k^2)^s|\w_k|^2 }\w_k d\mu_{\gamma,k}\\
&\int_\C e^{\frac{ir}{2}k'^2(1+a^2k'^2)^s|\w_k'|^2}\w_{k'} d\mu_{\gamma,k'}=0,
\end{align*}
if $k=k'$,
\begin{align*}
\hat \rho_{\w_k\bar \w_{k'}}(r)& = \prod_{j\neq k}\int_\C e^{\frac{ir}{2}j^2(1+a^2j^2)^s|\w_j|^2 }d\mu_{\gamma,j}\int_\C e^{\frac{ir}{2}k^2(1+a^2k^2)^s|\w_k|^2 }|\w_k |^2d\mu_{\gamma,k},
\end{align*}
where
\begin{align*}
\int_\C & e^{\frac{ir}{2}k^2(1+a^2k^2)^s|\w_k|^2}|\w_k |^2d\mu_{\gamma,k}\\
&= \frac{\gamma k^4 (1+ a^2k^2)^{2s}}{2\pi} \int_\C e^{\frac{ir}{2}k^2(1+a^2k^2)^s|\w_k|^2-\frac{\gamma}{2} k^4(1+ a^2k^2)^{2s} |\w_k|^2}\w_k  \bar\w_k dz\\
&=\frac{1}{\gamma k^4 (1+ a^2k^2)^{2s} - ir k^2(1+a^2k^2)^s}\int_\C e^{\frac{ir}{2}k^2(1+a^2k^2)^s|\w_k|^2 }d\mu_{\gamma,k}
\end{align*}
after complex by parts integration.
Then
$$
\hat \rho_{\w_k\bar \w_{k'}}(r)= \frac{1}{\gamma k^4 (1+ a^2k^2)^{2s} - ir k^2(1+a^2k^2)^s}\int_{H^{1-\alpha,s}} e^{ir\E(\varphi)}d\mu_\gamma.
$$
Hence
\begin{align*}
\rho_{\w_k\bar \w_{k'}}(\xi)&=\frac{1}{k^2(1+a^2k^2)^s}\int_{-\infty}^{+\infty} e^{-ir\xi}\frac{1}{\gamma k^2(1+a^2k^2)^s-ir}\int_{H^{1-\alpha,s}} e^{ir\E(\varphi)} d\mu_\gamma dr\\
&=\frac{1}{k^2(1+a^2k^2)^s}\int_{-\infty}^{+\infty} e^{-ir\xi}\frac{\hat \rho(\xi)}{\gamma k^2(1+a^2k^2)^s-ir}dr\\
\end{align*}
where
\begin{align*}
\int_{-\infty}^{+\infty} e^{-ir\xi}\frac{\hat \rho(\xi)}{\gamma k^2(1+a^2k^2)^s-ir}dr&=\left( \hat \rho(\xi)\frac{1}{\gamma k^2(1+a^2k^2)^s-ir} \right)^\vee\\
&=\rho(\xi)\ast \left(2\pi e^{-\gamma k^2(1+a^2k^2)^s y}\right),
\end{align*}
then
$$
\rho_{\w_k\bar \w_{k'}}(\xi)=\frac{\pi}{k^2(1+a^2k^2)^s}\int_0^{+\infty}\rho(\xi+y) e^{-\gamma k^2(1+a^2k^2)^s y} dy.
$$
We conclude that  $\En(\w_k\bar \w_{k'})=\frac{\rho_{\w_k\bar \w_{k'}}(r)}{\rho(r)}=0$ if $k\neq k'$ and 
$\En(\w_k\bar \w_{k'})=\frac{\pi}{k^2(1+a^2k^2)^s\rho(r)}\int_0^{+\infty}\rho(r+y) e^{-\gamma k^2(1+a^2k^2)^s y}dy$
if $k=k'$.

\section{The invariant flow}

\subsection{Existence}

Similar to \cite{C}, we show that the vector field $B$ is divergence free with respect to the surface measure $\nu^r_\gamma$. This will be fundamental for proving the existence of a flow on the level sets of the energy.
\begin{thm}\label{surf_meas_inv}
$$\int <B^n, \nabla f >^{*}_{2,s}d\nu^r_\gamma=0, \qquad \forall f\in \mathcal{D}$$
for any $<B^n, \nabla f >^{*}_{2,s}$ redefinition of $<B^n, \nabla f >_{2,s}$.
\end{thm}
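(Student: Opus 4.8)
The plan is to reduce the statement, via the disintegration formula of Theorem~\ref{surf_measure}, to the vanishing of the density $\rho_g$ attached to $g:=\langle B^n,\nabla f\rangle_{2,s}$, and then to prove $\rho_g\equiv0$ by a finite--dimensional integration by parts that combines $\div B^n=0$ with conservation of the enstrophy and of the energy. First I would note that $g=\langle B^n,\nabla f\rangle_{2,s}=D_{B^n}f$ is a polynomial in finitely many of the Gaussian coordinates $\w_k$ (since $B^n$ has finite--dimensional range and $f\in\mathcal D$), hence $g\in\W8$ by moment estimates of the type used in Propositions~\ref{B_regularity} and~\ref{reg_B_derivative}. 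Thus Theorem~\ref{surf_measure} gives, for every $r$ with $\rho(r)>0$ and every redefinition $g^*$, that $\int g^*\,d\nu^r_\gamma=\rho_g(r)/\rho(r)$, so it suffices to show $\rho_g\equiv0$; since $\rho_g$ is the Lebesgue density of $\E_*(g\,\mu_\gamma)$, this amounts to
$$
\int_{H^{1-\alpha,s}}v(\E(\varphi))\,\langle B^n(\varphi),\nabla f(\varphi)\rangle_{2,s}\,d\mu_\gamma=0\qquad\forall\,v\in S(\R).
$$

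To prove this I would work in finitely many dimensions. Let $\Lambda\subset\Z^2$ be a finite set containing the range of $B^n$ and every mode on which $g$ depends; decompose $\E=\E_\Lambda+\E_\Lambda^\perp$ along $\varphi=\Pi_\Lambda\varphi+\Pi^\perp_\Lambda\varphi$, and $\mu_\gamma=\mu^\Lambda_\gamma\otimes\mu^{\Lambda,\perp}_\gamma$. Writing $v(t)=\tfrac1{2\pi}\int\hat v(\xi)e^{i\xi t}\,d\xi$ and using that $g,\E_\Lambda$ involve only $\Lambda$--modes while $\E^\perp_\Lambda$ involves only the complementary ones, Fubini's theorem (legitimate since $g\in L^1_{\mu_\gamma}$ and $\hat v\in L^1(\R)$) reduces the display to
$$
\int_{\C^\Lambda}e^{i\xi\E_\Lambda}\,(D_{B^n}f)\,d\mu^\Lambda_\gamma=0\qquad\forall\,\xi\in\R.
$$
Let $\rho^\Lambda_\gamma$ be the Gaussian density of $\mu^\Lambda_\gamma$ and integrate by parts on $\C^\Lambda$ against Lebesgue measure, via $D_{B^n}(fw)=(D_{B^n}f)\,w+f\,D_{B^n}w$ with $w=e^{i\xi\E_\Lambda}\rho^\Lambda_\gamma$:
$$
\int_{\C^\Lambda}(D_{B^n}f)\,w\,dz=\int_{\C^\Lambda}D_{B^n}(fw)\,dz-\int_{\C^\Lambda}f\,D_{B^n}w\,dz.
$$
The first integral equals $-\int(\div B^n)\,fw\,dz=0$ because $\div B^n=0$ (cf.\ the proof of Theorem~\ref{divergence_free}) and $w$ has Gaussian decay. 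For the second, $D_{B^n}w=w\bigl(i\xi\,D_{B^n}\E_\Lambda+D_{B^n}\log\rho^\Lambda_\gamma\bigr)$, where $D_{B^n}\log\rho^\Lambda_\gamma=-\gamma\langle B^n,\varphi\rangle_{2,s}=0$ by conservation of the enstrophy and $D_{B^n}\E_\Lambda=\langle B^n,\nabla\E\rangle_{2,s}=0$ because the finite--dimensional flow of $B^n$ conserves the energy (Lemma~\ref{finite_dimensional_result}), so differentiating at $t=0$ gives $D_{B^n}\E\equiv0$. Hence both integrals vanish, $\rho_g\equiv0$, and the theorem follows. Equivalently, at the Malliavin level: $v(\E)D_{B^n}f=D_{B^n}(v(\E)f)-v'(\E)f\,D_{B^n}\E$, the last term is $0$ by energy conservation, and $\int D_{B^n}(v(\E)f)\,d\mu_\gamma=0$ since $\delta_{\mu_\gamma}B^n=0$ and the duality \eqref{divergence*} extends from $\mathcal D$ to $v(\E)f\in\W8$.

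The main obstacle is less any individual estimate than the bookkeeping: passing rigorously from the infinite--dimensional conditional measure $\nu^r_\gamma$ --- built through the Watanabe--Airault--Malliavin disintegration --- to the concrete finite--dimensional Gaussian integral, and verifying that the two conservation laws for the \emph{truncated} field $B^n$, used above as $D_{B^n}\E\equiv0$ and $D_{B^n}S\equiv\langle B^n,\varphi\rangle_{2,s}\equiv0$, hold identically in $\varphi$ and not just along trajectories --- which is precisely what the antisymmetry built into the coefficients $\alpha_{h,k}$ in \eqref{vector_field} provides.
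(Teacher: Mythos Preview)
Your proposal is correct and is essentially the same argument as the paper's: reduce via the disintegration formula to showing $\int v(\E)\,\langle B^n,\nabla f\rangle_{2,s}\,d\mu_\gamma=0$ for test functions $v$, then integrate by parts using $\delta_{\mu_\gamma}B^n=0$ and $\langle B^n,\nabla\E\rangle_{2,s}=0$. The paper simply performs this last step directly at the Malliavin level (exactly the ``equivalently'' remark you give at the end), writing $\delta_{\mu_\gamma}(v(\E)B^n)=v(\E)\delta_{\mu_\gamma}B^n-v'(\E)\langle B^n,\nabla\E\rangle_{2,s}=0$, whereas your main line unpacks the same computation through a Fourier representation of $v$ and an explicit finite--dimensional Gaussian integration by parts; this is a valid but more laborious route to the identical cancellation.
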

\begin{proof}
Let $f\in \mathcal{D}$ and $v\in C^{\infty}_0(\R)$ be arbitrary functions. We have,
\begin{align*}
\int_\R v(r)\rho(r)&\int_{V_r}  <B^n, \nabla f >^{*}_{2,s} d\nu^r_\gamma dr= \int_\R v(r) d (E * <B^n, \nabla f >_{2,s }\mu_\gamma)\\
&= \int_{H^{1-\alpha,s}}  <v(\E(\varphi))B^n, \nabla f >_{2,s }d\mu_\gamma \\
&= \int_{H^{1-\alpha,s}}
\delta_{\mu_\gamma} \left( v(\E(\varphi))B^n \right) f d\mu_\gamma \\
&=  \int_{H^{1-\alpha,s}} \left[  v(\E(\varphi))\delta_{\mu_\gamma} B^n - v'(\E(\varphi))<B^n, \nabla \E(\varphi) >_{2,s } \right]f d\mu_\gamma \\
&=0,
\end{align*}
because, as we saw in Theorem \ref{divergence_free}, $\delta_{\mu_\gamma} B^n=0$ and
$$
<B^n, \nabla \E(\varphi) >_{2,s }=2<B( \varphi^n),  \varphi^n>_{1,s}
=0
$$
since the energy is conserved.
\end{proof}

In order to prove existence of a global averaged-Euler flow defined $\nu^r_\gamma$ almost everywhere  and taking values on the level sets of the energy $\E$, recall the finite dimensional result of Lemma \ref{finite_dimensional_result} and that we denoted the flow associated to $B^n$ on $H^{1-\alpha,s}$ by $U^n(t,\varphi)=U^n(t,\varphi^n) + \Pi_n^\perp \varphi$.

\begin{thm}\label{flow_level_sets}
Let $\alpha>2$. For all $r>0$ such that $\rho(r)>0$, there exists a flow $U'(\cdot, \w)$ defined on a probability space $(\Omega, \mathcal{F}, P^r_\gamma)$ with values in $V_r$, $U'(\cdot, \w) \in C(\R; V_r)$, $\w \in \Omega$ such that:
\begin{enumerate}
\item \label{1}
for any $B^*$ redefinition of $B$,
$$
U'_k(t,\w)=U'_k(0,\w) + \int_0^t B_k^*(U'(s,\w))ds, \quad P^r_\gamma-a.e. \,\w, \,\, \forall t\in\R,
$$
\item \label{2} $\nu^r_\gamma$ is invariant for the flow, in the sense that:
$$
\int f(U'(t,\w))d P^r_\gamma(\w)=\int f(\varphi)d\nu^r_\gamma(\varphi), \qquad \forall t \in \R,\,\, \forall f\in \mathcal{D}.
$$
\end{enumerate}
\end{thm}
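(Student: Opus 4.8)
The plan is to follow, almost verbatim, the construction carried out in the proof of Theorem~\ref{exists}, with the Gaussian measure $\mu_\gamma$ replaced by the surface measure $\nu^r_\gamma$ and with the redefinitions forced by the fact that $B$ is only defined $\nu^r_\gamma$-a.e. The starting point is that the finite-dimensional flow $U^n(t,\varphi)=U^n(t,\varphi^n)+\Pi_n^\perp\varphi$ of Lemma~\ref{finite_dimensional_result} conserves the energy $\E$ (the truncated flow preserves the truncated energy and the orthogonal component is untouched) and preserves $\mu_\gamma$; since $\E\circ U^n(t,\cdot)=\E$ and $U^n(t,\cdot)_*\mu_\gamma=\mu_\gamma$, disintegrating $\int v(\E)(f\circ U^n(t,\cdot))\,d\mu_\gamma=\int v(\E)f\,d\mu_\gamma$ against $\nu^r_\gamma$ shows that $U^n(t,\cdot)$ descends to a $\nu^r_\gamma$-preserving transformation of each fibre $V_r$ with $\rho(r)>0$ — the infinitesimal form of this being Theorem~\ref{surf_meas_inv}. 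For $t\ge0$ I regard $U^n_k(\cdot,\varphi)$ as a $C(\R^+;\C)$-valued process on $(V_r,\nu^r_\gamma)$, with law $\eta^{r,n}_k(\Gamma)=\nu^r_\gamma\{\varphi:U^n_k(\cdot,\varphi)\in\Gamma\}$, and I aim to extract a limit by Prohorov and Skorohod.

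Tightness of $\{\eta^{r,n}_k\}_n$ in the sup-norm topology rests on two bounds. First, $\eta^{r,n}_k(|y(0)|>R)\le R^{-2}\En(|\w_k|^2)\to0$ as $R\to+\infty$, because $\En(|\w_k|^2)=\frac{\pi}{k^2(1+a^2k^2)^s\rho(r)}\int_0^{+\infty}\rho(r+y)e^{-\gamma k^2(1+a^2k^2)^s y}\,dy<+\infty$ by the second-order moment computation preceding this section. Second, using the equation, the $\nu^r_\gamma$-invariance of $U^n$ and Cauchy--Schwarz,
\begin{align*}
\eta^{r,n}_k\Big(\sup_{\substack{0\le t\le t'\le T\\ |t'-t|\le\d}}|y(t)-y(t')|>\rho\Big)
&\le\frac{1}{\rho^2}\En\Big(\sup_{t,t'}|U^n_k(t,\varphi)-U^n_k(t',\varphi)|^2\Big)\\
&\le\frac{\d T}{\rho^2}\,\En|B^n_k|^2 ,
\end{align*}
so the crucial point is a bound on $\En|B^n_k|^2$ uniform in $n$. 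Each $B^n_k$, and the limit $B_k$, is a convergent series of quadratic monomials in the coordinates whose Malliavin derivatives up to order two are controlled by Propositions~\ref{B_regularity} and \ref{reg_B_derivative} (all higher derivatives vanish); hence $B_k\in\W8$ and, by the Leibniz rule and Hölder, $|B^n_k|^2,|B_k|^2\in\W8$ with $|B^n_k|^2\to|B_k|^2$ in every $W_r^p$-norm — the tail estimate of Corollary~\ref{convergence} carrying over to the higher Sobolev norms. Since $g\mapsto\rho_g(r)$ is a continuous (Watanabe) distribution on $\W8$, we get $\En|B^n_k|^2=\rho_{|B^n_k|^2}(r)/\rho(r)\to\rho_{|B_k|^2}(r)/\rho(r)<+\infty$, whence $\sup_n\En|B^n_k|^2<+\infty$ (here $\rho(r)>0$ is used). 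Prohorov's criterion then applies, and since $\Z^2$ is countable a diagonal extraction yields $\eta^{r,n}_k\rightharpoonup\eta^r_k$ for all $k$.

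Skorohod's theorem then provides $(\Omega,\mathcal F,P^r_\gamma)$ and processes $U'^n_k,U'_k$ with laws $\eta^{r,n}_k,\eta^r_k$ and $U'^n_k(\cdot,\w)\to U'_k(\cdot,\w)$ $P^r_\gamma$-a.e.; running $B^n$ backwards covers $t<0$, and as in Theorem~\ref{exists} one gets $U'(\cdot,\w)\in C(\R;H^{1-\alpha,s})$. That $U'(t,\cdot)$ is $V_r$-valued follows from $\|U'^n(t,\w)\|_{1,s}^2=2r$ $P^r_\gamma$-a.e. (the flow $U^n$ preserves both $\nu^r_\gamma$ and $\E$), a lower-semicontinuity argument in the coordinates giving $\|U'(t,\w)\|_{1,s}^2\le2r$ a.e., which combined with $\int\|U'(t,\w)\|_{1,s}^2\,dP^r_\gamma=2r$ forces $\E^*(U'(t,\w))=r$ a.e. Invariance, item~\ref{2}, is obtained from $\int f(U^n(t,\varphi))\,d\nu^r_\gamma=\int f\,d\nu^r_\gamma$ for $f\in\mathcal D$ by passing to the limit through the identification of laws, exactly as in Theorem~\ref{exists}. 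For item~\ref{1} one estimates, for $B^*$ any redefinition of $B$,
\begin{align*}
\int\Big|\int_0^t[B^n_k(U'^n(s,\w))-B^*_k(U'(s,\w))]\,ds\Big|\,dP^r_\gamma
&\le\int\!\!\int_0^t|B^n_k-B_k|(U'^n(s,\w))\,ds\,dP^r_\gamma\\
&\quad+\int\!\!\int_0^t|B_k(U'^n(s,\w))-B^*_k(U'(s,\w))|\,ds\,dP^r_\gamma ;
\end{align*}
the first term tends to $0$ by the law identification, the $\nu^r_\gamma$-invariance of $U^n$ and $B^n_k\to B_k$ in $L^2_{\nu^r_\gamma}$; the second by dominated convergence, using the uniform integrability of $\{B_k(U'^n(s,\w))\}_n$ on $[0,t]\times\Omega$ (invariance plus the uniform $L^2_{\nu^r_\gamma}$ bound) and the $P^r_\gamma$-a.e. convergence of $B_k(U'^n(\cdot,\w))=\sum_{h^2\le n}\alpha_{h,k}U'^n_h(\cdot,\w)U'^n_{k-h}(\cdot,\w)$ to $B^*_k(U'(\cdot,\w))$, which follows from $U'^n_k\to U'_k$ together with the uniform convergence $B^n_k\to B_k$ of Corollary~\ref{convergence}.

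The main obstacle, and the only place where the argument genuinely departs from that of Theorem~\ref{exists}, is the transfer of the $L^p$ estimates for $B$ and its derivatives from $\mu_\gamma$ to the surface measure $\nu^r_\gamma$: one must verify that $B_k$ and $|B^n_k|^2$ belong to $\W8$, that the truncations converge there in all Sobolev norms, and then invoke the continuity of $g\mapsto\rho_g(r)$ in the Airault--Malliavin/Watanabe distributional framework underlying Theorem~\ref{surf_measure} to obtain the uniform $L^2_{\nu^r_\gamma}$ bound that powers the Prohorov estimate. A secondary, purely bookkeeping, difficulty is that every pointwise statement on $V_r$ — the $V_r$-valuedness of $U'$, the a.e. identity of item~\ref{1} — must be read through $(p,r)$-continuous redefinitions and checked to be insensitive to the choice of redefinition, which is exactly why $B^*$ appears in the statement.
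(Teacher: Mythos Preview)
Your approach is essentially the paper's: Prohorov tightness on $C(\R^+;\C)$ under $\nu^r_\gamma$, Skorohod to extract a limit, invariance by passing to the limit, and a two-term splitting for item~\ref{1}. Your identification of the main obstacle --- transferring the $L^p$ bounds from $\mu_\gamma$ to $\nu^r_\gamma$ via the continuity of $g\mapsto\rho_g(r)$ on $\W8$ --- is exactly what the paper packages as Lemma~\ref{conv_in_L^2_nu}, and your extra argument for $V_r$-valuedness fills in something the paper leaves implicit.

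The one place where the paper is more careful than your write-up is the second term in item~\ref{1}. You write $B_k(U'^n(\cdot,\w))=\sum_{h^2\le n}\alpha_{h,k}U'^n_h U'^n_{k-h}$ and invoke dominated convergence as in Theorem~\ref{exists}; but that formula is $B^n_k(U'^n)$, not $B_k(U'^n)$, and on $V_r$ the full series $B_k$ is only defined through a redefinition, so a direct ``coordinate-wise convergence plus Corollary~\ref{convergence}'' argument does not go through on the $\nu^r_\gamma$-null fibre. The paper instead writes both instances as $B^*_k$, chooses a set $D\subset H^{1-\alpha,s}$ with $\nu^r_\gamma(D)=0$ on whose complement the redefinition $B^*$ is continuous, uses invariance to check that $U'^n(s,\w),U'(s,\w)\in D^c$ for $\lambda\times P^r_\gamma$-a.e.\ $(s,\w)$, and concludes by continuity of $B^*|_{D^c}$ together with Egoroff's theorem. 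You flag this redefinition issue in your final paragraph but do not implement it in the actual convergence step; swapping your direct formula argument for this quasi-continuity argument closes the gap.
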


Before the proof of the theorem we give a complementary Lemma.

\begin{lem}\label{conv_in_L^2_nu}
The approximated averaged-Euler vector field $B^n$ converges to $B$ in \\ $L^2_{\nu^r_\gamma}(H^{1-\alpha,s};H^{1-\alpha,s})$ as $n\to \infty$.
\end{lem}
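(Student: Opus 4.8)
The goal is to upgrade the $L^2_{\mu_\gamma}$-convergence $B^n\to B$ (Corollary \ref{convergence}) to convergence in $L^2_{\nu^r_\gamma}$. The natural bridge is the co-area-type disintegration formula furnished by Theorem \ref{surf_measure}: for any $g\in\W8$ one has $\int g^*\,d\nu^r_\gamma=\rho_g(r)/\rho(r)$, and $\rho_g$ depends continuously (in the appropriate Watanabe sense, cf. \cite{Ma,AM}) on $g\in\W8$. So the strategy is: (i) show that the relevant scalar quantity, namely $\|B^n-B\|^2_{1-\alpha,s}$ truncated at level $k$ (or better, each $|B^n_k-B_k|^2$), lies in $\W8$ and forms a Cauchy sequence there; (ii) deduce that $\rho_{\|B^n-B^m\|^2}$ is Cauchy in $S(\R)$, hence in particular at the point $r$; (iii) combine with $\rho(r)>0$ to conclude $\int \|B^n-B^m\|^{2*}_{1-\alpha,s}\,d\nu^r_\gamma\to 0$, so $\{B^n\}$ is Cauchy in $L^2_{\nu^r_\gamma}$; (iv) identify the limit with $B^*$ using that $B^n\to B$ also $\mu_\gamma$-a.e.\ (and pass to a subsequence converging $\nu^r_\gamma$-a.e., using slimness of the exceptional set).

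For step (i) I would argue exactly as in Propositions \ref{B_regularity} and \ref{reg_B_derivative}: each $B^n_k(\varphi)=\sum_{h^2\le n}\alpha_{h,k}\w_h\w_{k-h}$ is a polynomial of degree two in the Gaussian variables, so $|B^n_k-B_k|^2$ and $\|B^n-B\|^2_{1-\alpha,s}$ manifestly belong to $W^p_r$ for every $p,r$ by the same moment computations (Gaussian hypercontractivity makes all $L^p_{\mu_\gamma}$ norms of polynomials finite, and differentiating lowers the degree), with bounds uniform in $n$ once $\alpha>2$; the Cauchy estimate is precisely the one displayed in the proof of Corollary \ref{convergence}, namely $\Em|B^n_k-B^m_k|^2\le \frac{8}{\gamma^2}\sum_{n\le h^2\le m}|\alpha_{h,k}|^2 h^{-4}(1+a^2h^2)^{-2s}(k-h)^{-4}(1+a^2(k-h)^2)^{-2s}$, summed against $k^{2(1-\alpha)}(1+a^2k^2)^{(1-\alpha)s}$, and the analogous estimates for the $W^p_r$-seminorms follow from \eqref{first_derivative_B}, \eqref{D^2norm}-type formulas. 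Hence $\|B^n-B\|^2_{1-\alpha,s}\to 0$ not merely in $L^1_{\mu_\gamma}$ but in $\W8$.

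For steps (ii)--(iii), since $\Phi\mapsto\rho_\Phi$ is continuous from $\W8$ into $S(\R)$ (this is the Malliavin--Watanabe smoothness of the pushforward density, valid because $\E$ is of maximal rank and nondegenerate by Propositions \ref{reg_ren_energy} and \ref{max_rank_energy}), convergence of $\|B^n-B\|^2_{1-\alpha,s}$ to $0$ in $\W8$ forces $\rho_{\|B^n-B\|^2_{1-\alpha,s}}\to 0$ in $S(\R)$, in particular pointwise at $r$; dividing by $\rho(r)>0$ and invoking Theorem \ref{surf_measure} gives $\int \bigl(\|B^n-B\|^2_{1-\alpha,s}\bigr)^{*}\,d\nu^r_\gamma\to 0$. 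A subsequence then converges $\nu^r_\gamma$-a.e.; since $B^n\to B$ $\mu_\gamma$-a.e.\ and the set where a $(p,r)$-redefinition differs from the function is slim (hence $\nu^r_\gamma$-null for $r$ with $\rho(r)>0$), the $\nu^r_\gamma$-a.e.\ limit is the redefinition $B^*$, and by uniqueness of limits the whole sequence converges to $B^*$ in $L^2_{\nu^r_\gamma}$. This is the content claimed.

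\textbf{Main obstacle.} The only genuinely delicate point is the continuity of the map $\Phi\mapsto\rho_\Phi(r)$ from $\W8$ to $\R$ at a fixed level $r$ with $\rho(r)>0$; everything else is a repetition of the Gaussian moment bookkeeping already done in Section 3. I would either cite \cite{AM, Ma} for this (it is essentially the statement that $<\delta\E,\cdot>$ is a continuous linear functional on $\W8$, already invoked in the remark following Theorem \ref{surf_measure}), or, more concretely, note that $\rho_\Phi(r)=\langle\langle\delta_*\E,\delta_r\rangle,\Phi\rangle$ with $\delta_r\in S'(\R)$ paired via \eqref{duality}, and continuity of $\delta_*\E$ on the relevant Sobolev scale gives $|\rho_{\Phi_n}(r)-\rho_{\Phi_m}(r)|\le C_r\|\Phi_n-\Phi_m\|_{W^p_{2r_0}}$ for suitable $p,r_0$; the right-hand side goes to $0$ by step (i). One should take a little care that the estimate is genuinely at the single point $r$ rather than in an $L^1_{dr}$ or $S(\R)$ sense only — but since $\rho_{\Phi_n}$ is Cauchy in $S(\R)$ it is Cauchy uniformly on compacts, so pointwise Cauchy at $r$ is automatic.
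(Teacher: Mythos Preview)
Your proposal is correct and follows essentially the same route as the paper: show that $\|B^n-B\|^2_{1-\alpha,s}\to 0$ in $\W8$ using the moment estimates of Section~\ref{reg}, then invoke the continuity of $g\mapsto\rho_g(r)$ from $\W8$ (the paper cites the explicit bound $\rho_g(r)\le\|g\|_{W^p_r}$ from \cite{AM,Ma} and the chain-rule inequality $\|\nabla\|B^n-B\|^2_{1-\alpha,s}\|_{2,s}\le 2\|B^n-B\|_{1-\alpha,s}\|\nabla(B^n-B)\|_{H.S.}$) together with $\rho(r)>0$ and Theorem~\ref{surf_measure}. Your step~(iv) on identifying the limit is not needed once $\int(\|B^n-B\|^2_{1-\alpha,s})^*\,d\nu^r_\gamma\to 0$ is established, since that is already the $L^2_{\nu^r_\gamma}$ convergence claimed.
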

\begin{proof}
From the results on the regularity of $B$, Subsection \ref{reg}, we get $B\in \W8(H^{1-\alpha,s})$ ($\W8(H^{1-\alpha,s})$ denotes the space of functions $\W8$ with values in $H^{1-\alpha,s}$) and therefore
\begin{equation}\label{reg_B*}
\int_{V_r} (\|B(\varphi)\|^2_{1-\alpha,s})^* d\nu^r_\gamma <\infty.
\end{equation}
Also from the results of Subsection \ref{reg} it follows that $B^n$ is a Cauchy sequence in $W^p_r(H^{1-\alpha,s})$ for all $r,p$ and by definition of $\nu^r_\gamma$ we have 
$$
\rho(r)\int_{V_r} (\|B^n-B\|_{1-\alpha,s}^2)^* d\nu^r_\gamma =\rho_{\|B^n-B\|_{1-\alpha,s}^2}(r).
$$
Hence if we show that $\rho_{\|B^n-B\|_{1-\alpha,s}^2}(r)$ converges to zero as $n$ tends to infinity, we get the lemma. 
From \cite{AM,Ma} we know that $\rho_{\|B^n-B\|_{1-\alpha,s}^2}(r)\leq 
\left\| \|B^n-B\|_{1-\alpha,s}^2 \right\|_{W^p_r}$ while 
$$
\left\| \|B^n-B\|_{1-\alpha,s}^2 \right\|_{W^p_r}\leq C \|B^n-B\|_{W^{2p}_r(H^{1-\alpha,s})}^2.
$$
In fact we have  
$$
|D_{\hat e_j} \|B^n-B\|_{1-\alpha,s}^2|\leq 2 \|D_{\hat e_j}(B^n-B)\|_{1-\alpha,s}\|B^n-B\|_{1-\alpha,s}
$$
and thus
$$
\|\nabla \|B^n-B\|_{1-\alpha,s}^2\|_{2,s} \leq 2 \|B^n-B\|_{1-\alpha,s} \|\nabla (B^n-B)\|_{H.S.(H^{2,s}; H^{1-\alpha,s})}.
$$
A similar argument holds for the higher order derivatives.
\end{proof}

In particular, from Lemma \ref{conv_in_L^2_nu}, there exists a constant $C_2$ such that 
$$
\sup_n\int_{V_r}(\|B^n(\varphi)\|^2_{1-\alpha,s})^* d\nu^r_\gamma \leq C_2, \quad \forall \alpha>2.
$$

We finally prove Theorem \ref{flow_level_sets}

\begin{proof}[Proof of Theorem \ref{flow_level_sets}]
Let $t\in\R^+$ and consider $U^n_k$ as a stochastic process with laws on the space $C(\R^+; \C)$ endowed with the sup-norm:
$$
\eta_k^n (\Gamma)=\nu^r_\gamma(\{ \varphi : U^n_k (\cdot, \varphi)\in \Gamma\}), \quad \Gamma\subset C(\R^+; \C).
$$
We consider the weak topology on the space of measures on $C(\R^+; \C)$ . We have 
\begin{enumerate}[label=\arabic*.]
\item $$\eta_k^n(|y(0)|>R)\leq \frac{1}{R^2} \En|\w_k|^2\leq \frac{C_3}{R^2} \to 0 \mbox{  when  }R\to \infty$$
\item  for all $L>0 \mbox{ and } T>0$,
\begin{align*}
\eta_k^n\left(\sup_{\substack{0\leq t\leq t'\leq T \\ t'-t>\delta}} |y(t')-y(t)|>L \right)& \leq \frac{1}{L^2} \En \left( \sup_{t',t}|U^n_k(t',\varphi)-U^n_k(t,\varphi)|^2\right)\\
& \leq \frac{\delta}{L^2}\En \left( \int_0^T |B_k^n(U^n(s,\varphi))|^2 ds \right) \\
& \leq  \frac{T \delta}{L^2} \En |B^n_k|^2 \\
& \leq \frac{T \delta C_2}{L^2} \to 0 \mbox{  when  }\delta\to 0, 
\end{align*}
\end{enumerate}
where in the last inequalities we used respectively that $B^n$ is $\nu^r_\gamma$-invariant for all $n$ and that \\ $\sup_n\En \|B^n(\varphi)\|^2_{1-\alpha,s} \leq C_2$. Hence, by Prohorov's criterium (actually a combined version of Prohorov's criterium and Ascoli-Arzel\`{a} theorem) we can state that there exists a subsequence of $\eta_k^n$ (for simplicity it will also be denoted by $\eta_k^n$), that converges to some probability measure $\eta_k$. We denote by $U_k$ the stochastic process with law $\eta_k$. By Skorohod's theorem, there exists a probability space $(\Omega, \mathcal{F}, P^r_\gamma)$ and a family of processes $U'^{n}(t,\w)$, $U'(t,\w)$ with laws respectively $\eta^n,\eta$. Furthermore $U'^{n}(\cdot,\w)\to U'(\cdot,\w)$ for a.e. $\w\in\Omega$, that is, there exists $A\subset \Omega$ such that $P^r_\gamma (A^c)=0$ and for all $\w\in A$, $U'^{n}(s,\w)\to U'(s,\w)$ in $H^{1-\alpha,s}$ for all $s\in[0,T]$. Repeating the construction for the processes $t\in\R^+\mapsto U^n_k(-t,\varphi)$ we obtain the negative values of $t$. 
We now prove \ref{2}: for all $f$ in $\mathcal{D}$, because  $\delta_{\nu^r_\gamma}B^n=0$ for all $n$ (Theorem \ref{surf_meas_inv}), we have,
$$
\int f(U^n(t,\varphi))d\nu^r_\gamma(\varphi)=\int f(\varphi)d\nu^r_\gamma(\varphi).
$$
On the other hand, by definition of $\eta^n$ and $\eta$,
\begin{align*}
\int f(U^n(t,\varphi))d\nu^r_\gamma(\varphi)& =\int f(y(t))d\eta^n(y(t)) \\
&=\int f(U'^n(t,\w))d P^r_\gamma(\w) \\
& \to \int f(U'(t,\w))d P^r_\gamma(\w) \mbox{ when } n \to \infty.
\end{align*}

To prove \ref{1} it is enough to check that 
$$
\int \left| \int_0^t B_k^n(U'^{n}(s,\w)) - B_k^*(U'(s,\w)) ds \right| dP^r_\gamma
$$
converges to zero when $n$ goes to infinity; we have
\begin{align*}
\int \left| \int_0^t B_k^n(U'^n(s,\w)) - B_k^*(U'(s,\w)) ds \right| dP^r_\gamma & \leq \int_0^T \int \left| B^n_k(U'^n(s,\w))- B_k^*(U'^n(s,\w)) \right| dP^r_\gamma ds \\
&+\int_0^T \int \left|  B_k^*(U'^n(s,\w)) -B_k^*(U'(s,\w)) \right| dP^r_\gamma ds .
\end{align*}

The first integral converges to zero by the $\nu^r_\gamma$-invariance of the flow and because $B^n$ converges to $B$ in $L^2_{\nu^r_\gamma}(H^{1-\alpha,s};H^{1-\alpha,s})$ as we proved in Lemma \ref{conv_in_L^2_nu}. For the second integral consider $D'=[0,T]\times A^c$, clearly $\lambda \times P^r_\gamma (D')=0$ where $\lambda(ds)$ denotes the Lebesgue measure on $\R$.
Also we can find a subset $D\subset H^{1-\alpha,s}$ such that $\nu^r_\gamma(D)=0$ and $B^*$ restricted to $D^c$ is continuous, then define
$$
A_n=\{(s,\w) \in [0,T]\times \Omega : U'^n(s,\w)\in D \}
$$
and 
$$
A_\infty=\{(s,\w) \in [0,T]\times \Omega : U'(s,\w)\in D \}.
$$
We have $\lambda \times P^r_\gamma (A_n)=0$ for all $n$ and $\lambda \times P^r_\gamma (A_\infty)=0$.  Set 
$$
\Delta= A_\infty \cup (\cup_n A_n) \cup D'.
$$
Let $(s,\w)\in \Delta^c$; in particular $U'^n$ and $U'$ take values in $D^c$ (in which $B^*$ is continuos) and $U'^n_k(s,\w)\to U'_k(s,\w)$ in $\C$. We have, 
$$
\left|  B_k^*(U'^n(s,\w)) -B_k^*(U'(s,\w)) \right|  \to 0
$$
and since
$$
\int_0^T \int \left|  B_k^*(U'^n(s,\w)) -B_k^*(U'(s,\w)) \right| dP^r_\gamma ds
$$
is uniformly bounded, by Egoroff's theorem the second integral also converges to zero. 
\end{proof}

\subsection{Return to a neighborhood of its initial state}

The Poincar\'{e} recurrence theorem holds in our particular case. This is used here to prove that the globally defined invariant flow returns infinitely many times in a neighborhood of the initial state. A similar result is proved in \cite{CL} for the one-dimensional Camassa-Holm equation and certain initial profiles for which the solutions exist globally. We recall the Poincar\'{e} recurrence theorem (c.f. \cite{KAC}).

\begin{thm}
Let $P$ be a probability measure defined on a set $\Omega$. If $\{T_t\}_{t\geq 0}$ is a one-parameter family of measure preserving  transformations and $\Omega_0$ is a subset of $\Omega$ with $P(\Omega_0)>0$, then for $P$-almost every $\w\in \Omega_0$ there exist arbitrarily large $t$ such that $T_t\w\in\Omega_0$. 
\end{thm}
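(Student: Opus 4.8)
The plan is to deduce the statement from the classical discrete-time Poincar\'e recurrence theorem, exploiting that $\{T_t\}_{t\geq 0}$ is a one-parameter semigroup, $T_{t+s}=T_t\circ T_s$ — precisely the property enjoyed by the flows constructed in Theorems \ref{exists} and \ref{flow_level_sets}, together with their measure-preservation. First I would fix the single measure-preserving map $S:=T_1$ and introduce the ``never-returning'' set
$$
N=\{\w\in\Omega_0:\ S^n\w\notin\Omega_0\ \text{for every}\ n\geq 1\}.
$$
The core of the proof is the claim $P(N)=0$, which follows from the standard disjointness argument: if $S^{-i}N\cap S^{-j}N\neq\emptyset$ for some $0\leq i<j$, then picking $\w$ in the intersection gives $S^i\w\in N\subset\Omega_0$ and $S^{\,j-i}(S^i\w)=S^j\w\in N\subset\Omega_0$, contradicting the definition of $N$; hence the sets $\{S^{-n}N\}_{n\geq 0}$ are pairwise disjoint, each has measure $P(N)$ since $S$ preserves $P$, and their total measure is at most $P(\Omega)=1$, forcing $P(N)=0$. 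Measurability of $N$ causes no trouble, as it is built from countably many preimages of $\Omega_0$ under the measurable maps $S^n$.

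Next I would upgrade ``some return'' to ``arbitrarily large return''. Running the previous step with $S$ replaced by the measure-preserving map $S^m$, for each fixed integer $m\geq 1$, produces a full-measure subset of $\Omega_0$ on which the orbit returns to $\Omega_0$ at some time that is a positive multiple of $m$, hence at some time $\geq m$. Intersecting these countably many full-measure subsets over $m\in\N$ yields $\Omega_0^{\ast}\subset\Omega_0$ with $P(\Omega_0\setminus\Omega_0^{\ast})=0$ such that every $\w\in\Omega_0^{\ast}$ has an unbounded set of return times $\{n\geq 1:S^n\w\in\Omega_0\}$. Since the semigroup property gives $S^n=T_n$ by induction, this says exactly that for $P$-almost every $\w\in\Omega_0$ there exist arbitrarily large (integer) times $t$ with $T_t\w\in\Omega_0$, which is the assertion.

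The only genuinely delicate point is the one already isolated: the family $\{T_t\}$ must be used as a semigroup, so that iterating the single map $T_1$ recovers the continuous parameter — without this the classical argument does not even get off the ground, which is why one first checks the flow identity for the constructed dynamics before invoking recurrence. Everything else is routine bookkeeping with countable unions of null sets. If one insisted on non-integer return times too, I would additionally invoke joint measurability of $(t,\w)\mapsto T_t\w$ and a Fubini argument applied to $\{(t,\w):T_t\w\in\Omega_0\}$, but for the statement as phrased the integer times produced above already suffice.
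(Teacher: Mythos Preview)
Your argument is the standard and correct proof of Poincar\'e recurrence: the disjointness of the preimages $S^{-n}N$, the measure-preservation giving $P(S^{-n}N)=P(N)$, and the countable intersection over $m\in\N$ to upgrade to arbitrarily large return times are all handled properly. You are also right to flag that the semigroup identity $T_{t+s}=T_t\circ T_s$ is the essential input, since the theorem statement only says ``one-parameter family''; without it the reduction to iterating $T_1$ would be unjustified.

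The paper, however, does not prove this theorem at all: it merely recalls the statement and cites \cite{KAC}. So there is no proof in the paper to compare against --- your proposal supplies a full self-contained argument where the paper defers to the literature. In that sense your approach is strictly more detailed than what the paper does, and nothing in it conflicts with the paper's use of the result.
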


\begin{thm}
Let $\alpha>2$ and fix $\varphi_0\in V_r\subset H^{1-\alpha,s}$. If $\varepsilon>0$ is sufficiently small, then for $\nu^r_\gamma$-a.e. $\varphi \in V_r \subset H^{1-\alpha,s}$ such that $\|\varphi - \varphi_0\|_{1-\alpha,s}<\varepsilon$, there exists a sequence $\{t_n\}\uparrow \infty$ such that the corresponding invariant flow starting from $\varphi$, $U_\varphi(t,\w)$, satisfies $\mathbb{E}_{P^r_\gamma}\|U_\varphi(t_n,\w) -\varphi_0\|_{1-\alpha,s}<2\varepsilon$.
\end{thm}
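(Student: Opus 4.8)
\emph{Proof proposal.} The plan is to apply the Poincar\'{e} recurrence theorem recalled above, taking $\Omega=V_r$, $P=\nu^r_\gamma$, and $T_t:=U(t,\cdot)$ the time-$t$ map of the invariant flow on the level set. The first step is to check that $\{T_t\}_{t\geq0}$ is a legitimate one-parameter family of $\nu^r_\gamma$-measure-preserving transformations of $V_r$. By Theorem \ref{flow_level_sets} together with Remark \ref{REM}, for $\nu^r_\gamma$-a.e.\ $\varphi\in V_r$ and every $t\in\R$ the trajectory $t\mapsto U(t,\varphi)$ is the unique deterministic $B$-flow issued from $\varphi$; hence, on a set of full $\nu^r_\gamma$-measure, $T_0=\mathrm{id}$, $T_{t+s}=T_t\circ T_s$, and $U(t,\varphi)\in V_r$ by conservation of the energy. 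Reading the invariance property in Theorem \ref{flow_level_sets} through the deterministic identification of Remark \ref{REM} gives $(T_t)_\ast\nu^r_\gamma=\nu^r_\gamma$ for every $t$, so the family $\{T_t\}_{t\geq0}$ is admissible for the recurrence theorem, whose statement is already in continuous-time form.

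Next I would fix $\varphi_0\in V_r$ and, for $\varepsilon>0$, put
$$
\Omega_0:=\{\varphi\in V_r:\ \|\varphi-\varphi_0\|_{1-\alpha,s}^2<\varepsilon\},
$$
which is relatively open in $V_r$ and contains $\varphi_0$. Since by Theorem \ref{surf_measure} the measure $\nu^r_\gamma$ has support $V_r$, every nonempty relatively open subset of $V_r$ has positive $\nu^r_\gamma$-measure, so $\nu^r_\gamma(\Omega_0)>0$; this is the only role of ``$\varepsilon$ sufficiently small'', namely to guarantee $\Omega_0\neq\emptyset$ (and, if one prefers not to invoke the full support statement, to ensure $\Omega_0$ carries positive mass). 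The Poincar\'{e} recurrence theorem applied to $P=\nu^r_\gamma$, $\{T_t\}_{t\geq0}$ and $\Omega_0$ then yields: for $\nu^r_\gamma$-a.e.\ $\varphi\in\Omega_0$ there are arbitrarily large $t$ with $T_t\varphi=U(t,\varphi)\in\Omega_0$. Choosing such a sequence $\{t_n\}\uparrow\infty$ gives $\|U(t_n,\varphi)-\varphi_0\|_{1-\alpha,s}^2<\varepsilon<2\varepsilon$, which is the assertion; the factor $2$ leaves room to run the very same argument with the enlarged ball $\{\|\varphi-\varphi_0\|_{1-\alpha,s}^2<2\varepsilon\}$ as $\Omega_0$, which automatically contains every $\varphi$ appearing in the statement.

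The only point that genuinely requires care — and hence the main obstacle — is the first step: one must know that the a.e.-defined deterministic flow is a bona fide measure-preserving transformation of $(V_r,\nu^r_\gamma)$, i.e.\ that the probabilistic invariance on $(\Omega,\mathcal F,P^r_\gamma)$ furnished by Theorem \ref{flow_level_sets} descends to $(T_t)_\ast\nu^r_\gamma=\nu^r_\gamma$. This is exactly what uniqueness (Theorem \ref{uniq}), through Remark \ref{REM}, provides: the law $\eta$ of the flow satisfies $e_0\ast\eta=\nu^r_\gamma$ and disintegrates into Dirac masses, so $e_t\ast\eta=(T_t)_\ast\nu^r_\gamma$, while invariance of $\eta$ under time translation of trajectories gives $e_t\ast\eta=\nu^r_\gamma$. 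Once this is in place, everything else is a direct invocation of the recurrence theorem and no further estimates are needed.
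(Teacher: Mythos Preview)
Your proposal is correct and follows exactly the same approach as the paper: apply the Poincar\'{e} recurrence theorem to the open ball $B(\varphi_0,\varepsilon)=\{\varphi\in V_r:\|\varphi-\varphi_0\|_{1-\alpha,s}^2<\varepsilon\}$, which has positive $\nu^r_\gamma$-measure. The paper's own proof is a single sentence to this effect; your version simply spells out the legitimacy of $\{T_t\}$ as a measure-preserving family (via Theorem~\ref{flow_level_sets}, Theorem~\ref{uniq} and Remark~\ref{REM}) and the positivity of the measure of the ball, which the paper leaves implicit.
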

\begin{proof}
The statement follows by applying Poincar\'{e} recurrence theorem to the open set $B(\varphi_0,\varepsilon)=\{\varphi \in V_r \subset  H^{1-\alpha,s} : \|\varphi - \varphi_0\|_{1-\alpha,s}^2<\varepsilon\}$ with $\nu^r_\gamma\{B(\varphi_0,\varepsilon)\}>0$.
\end{proof}

\subsection*{Acknowledgements}
The author wishes to thank Professor Ana Bela Cruzeiro for the introduction to the topic of invariant measures and for the great support during the preparation of this research. The author was funded by the LisMath fellowship PD/BD/52641/2014, FCT, Portugal.


\bibliographystyle{plain}

\vspace{1cm}

E-mail: \tt asymeonides@fc.ul.pt

\end{document}